\def\endpf{\hbox{\vrule height1.5ex width.5em}}
\def\<{\langle}
\def\>{\rangle}
\numberwithin{equation}{section}
\def\<{\langle}
\def\>{\rangle}
\def\ra{\rightarrow}
\def\p{\partial}
\def\a{\alpha}
\def\wt{\widetilde}
\def \sm{\setminus}
\def\-{\overline}
\def\e{\epsilon}
\def\endpf{\hbox{\vrule height1.5ex width.5em}}
\def\a{\alpha}
\def\endpf{\hbox{\vrule height1.5ex width.5em}}
\def\C{\hbox{C}}
\def\-{\overline}
\def\sm{\setminus}
\def\wt{\widetilde}
\def\ra{\rightarrow}
\def\p{\partial}
\def\endpf{\hbox{\vrule height1.5ex width.5em}}
\def\a{\alpha}
\def\C{\hbox{C}}
\def\a{\alpha}
\def\endpf{\hbox{\vrule height1.5ex width.5em}}
\def\a{\alpha}
\def\ra{\rightarrow}
\def\p{\partial}
\def\wt{\widetilde}
\def\-{\overline}
\newtheorem{theorem}{Theorem}[section]
\newtheorem{lemma}[theorem]{Lemma}
\newtheorem{proposition}[theorem]{Proposition}
\newtheorem{Definition}[theorem]{Definition}
\date{\ }
\begin{document}

\title{\bf Non-embeddability   into a fixed sphere for a family of  compact  real algebraic hypersurfaces }
\author{Xiaojun Huang\footnote{ Supported in part by
 NSF-1363418},\  \ \ \ Xiaoshan Li \  \ and \ \ Ming Xiao}

\maketitle

\section{Introduction}
Compact CR manifolds of hypersurface type play an important role in
the subject of Complex Analysis of Several Variables. For instance,
these manifolds include the small link of all isolated complex
singularities and, in particular, all exotic spheres of Milnor. In a
more geometric aspect, spheres are the model of strongly
pseudo-convex hypersurfaces. Motivated by various embedding theorems
in differential topology, Stein space theory, etc, it has been a
natural  question
 in Several Complex Variables to determine when a real hypersurface
$M\subset {\mathbb C}^n$  can be holomorphically embedded into the
sphere: $\mathbb{S}^{2N-1}:=\{\sum_{j=1}^N|z_{j}|^{2}=1\} \subset
\mathbb{C}^{N}$  for a sufficiently large $N$.

By a  holomorphic  embedding of $M\subset {\mathbb C}^n$ into $M'
\subset {\mathbb C}^N$, we mean a holomorphic embedding of an open
neighborhood $X$ of $M$ into a neighborhood $X'$ of $M'$, sending
$M$ into $M'$. It follows easily  that a hypersurface
holomorphically embeddable into a sphere ${\mathbb
S}^{2N-1}:=\{\sum_j |z_j|^2=1\}\subset\C^N$ is necessarily strongly
pseudoconvex and real-analytic. However, not every strongly
pseudoconvex real-analytic hypersurface can be  embedded into a
sphere of any dimension, as  shown by Forstneric [For] and Faran [Fa]
in the mid 1980s  based on  a Baire category argument. Explicit
examples of non-embeddable strongly pseudoconvex real-analytic
hypersurfaces were given much later by Zaitsev in [Zat] along with
explicit invariants serving as obstructions to the embeddability.

A recent  observation in [HZ] further shows that if a germ $M$ of a
strongly pseudoconvex algebraic hypersurface extends to a germ of
algebraic hypersurface with strongly pseudoconcave points or with
Levi non-degenerate points of positive signature, then $M$ can not
be holomorphically embedded into any sphere.

However, much less is  known about  the holomorphic embeddability of
an open piece of a  compact  strongly pseudoconvex hypersurface into
a sphere. In [HZ], using the local construction in [Zat], the
authors gave a compact real analytic strongly pseudoconvex
hypersurface, an open piece of which  can not embedded into  a
sphere. Also, in [HZ], it was shown that there are many compact
algebraic pseudoconvex hypersurfaces with just one weakly
pseudoconvex point any open piece of which can not be embedded into
any compact algebraic strongly pseudoconvex hypersurface which, in
particular, includes the spheres. For a related work on this, the reader
may also consult Ebenfelt and Son [ES]. Here, we should mention a
celebrated result of Fornaess [Forn] which states that any compact
smooth strongly pseudo-convex hypersurface in a complex Euclidean
space can be embedded into a compact strongly convex hypersurface in
${\mathbb C}^N$  for a sufficiently large $N$.  Though much
attention has been paid to the understanding of the embeddability
problem as discussed above, the following remains as  an open
question of long standing:

\medskip
{\bf Open Question}: {\it Is any compact strongly pseudoconvex real
algebraic hypersurface in $\mathbb{C}^n$ ($n\ge 2$) embeddable into
a sphere of a sufficiently large dimension?}

\medskip
Here recall that a smooth real hypersurface in an open subset $U$ of $\mathbb{C}^{n}$ is called real algebraic,
if it has a real-valued polynomial defining function.


In this paper, we carry out a study along the lines of the above
open question. First, write

\begin{equation} \label{equhym}
M_\e=\{(z,w)\in\mathbb{C}^2:\rho=\varepsilon_0(|z|^8+c\mathrm{Re}|z|^2z^6)+|w|^2+|z|^{10}+\e
|z|^2-1=0\}.
\end{equation}
Here, $2<c<\frac{16}{7}$, $\varepsilon_0 >0$ is a sufficiently small
number  such that $M_\varepsilon$ is smooth for all $0\le
\e<1$. An easy computation shows that, for any $0 <
\e < 1,$ $M_{\e}$ is strongly pseudoconvex. Also,
it is easy to see that $M_\e$ is compact. $M_\e$ is a small
algebraic deformation of the famous Kohn-Nirenberg domain \cite{kn}.
Write $D_{\e}$ for the domain bounded by $M_\e.$
We prove the following result in this paper:

\begin{theorem}\label
{th1} For any positive integer $N$, there is a number $\e(N)$ with
$0<\e(N)<1$ such that for any $\e$ with $0<\e<\e(N)$,  the compact
algebraic strongly pseudoconvex hypersurface $M_\e$ can not
be locally holomorphically embedded into $\mathbb{S}^{2N-1}$.
Namely, for an open piece $U_\e$ of $M_\e$, any holomorphic
map sending $U_\e$ into $\mathbb{S}^{2N-1}$ must be a constant map.
\end{theorem}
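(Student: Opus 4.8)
The plan is to argue by contradiction. Assume that for some fixed $N$ and arbitrarily small $\e>0$ there is an open piece $U_\e\subset M_\e$ and a non-constant holomorphic map $F=(f_1,\dots,f_N)$ sending $U_\e$ into $\mathbb{S}^{2N-1}$. The engine of the proof is the complex line $L_{w_0}=\{w=w_0\}$ with $|w_0|=1$. This line is the Segre variety of the point $(0,w_0)\in M_\e$ — a point at which the Levi form of $M_\e$ is exactly $\e$, so that $M_0$ is only weakly pseudoconvex there — and, because the Kohn--Nirenberg coefficient satisfies $c>1$, this line actually dips into the domain $D_\e$.

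First I would make $F$ global. By the reflection principle for strongly pseudoconvex real-analytic hypersurfaces, $F$ extends holomorphically to a two-sided neighborhood of $U_\e$ and carries the pseudoconvex side $D_\e$ into $\mathbb{B}^N$ (this orientation statement follows since $F$ is a holomorphic immersion along $U_\e$, so $\partial\bar\partial|F|^2$ is positive on the complex tangent of $M_\e$). Since $M_\e$ is real-algebraic, connected, strongly pseudoconvex (hence minimal and holomorphically nondegenerate) and the target is a sphere, Huang's algebraicity theorem forces $F$ to be algebraic. Writing $\Theta(z,w;\xi,\tau)$ for the polarization of the defining polynomial $\rho$, the conjugate-polarization of $\sum_j|f_j|^2\equiv 1$ on $U_\e$ then propagates to the identity $\sum_j f_j(z,w)\,\overline{f_j}(\xi,\tau)\equiv 1$ on the (irreducible) complexification $\{\Theta=0\}$, where $\overline{f_j}$ has conjugated Taylor coefficients. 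Consequently $F$ continues holomorphically along $M_\e$ off a proper real-algebraic subset $\Sigma$, still taking values in $\mathbb{S}^{2N-1}$ and still carrying the $D_\e$-side into $\mathbb{B}^N$.

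Now comes the key structural point: every monomial of $\rho$ other than $|w|^2$ and the constant involves $\bar z$, so $\Theta(z,w;0,\tau)=w\tau-1$. Hence for each $w_0$ with $|w_0|=1$ the whole line $L_{w_0}$ lies in $\{\Theta=0\}$ paired with $(\xi,\tau)=(0,\overline{w_0})$, and the functional equation yields $\sum_j f_j(z,w_0)\,\overline{f_j(0,w_0)}\equiv 1$ in $z$ (assuming $(0,w_0)\notin\Sigma$). Thus $F(L_{w_0})$ lies in the complex affine hyperplane $H_{w_0}=\{Z'\in\mathbb{C}^N:\sum_j Z'_j\,\overline{f_j(0,w_0)}=1\}$, which by Cauchy--Schwarz, together with $|F(0,w_0)|=1$, meets $\overline{\mathbb{B}^N}$ only at the single point $F(0,w_0)$. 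On the other hand, $\rho$ restricted to $L_{w_0}$ equals $\e_0|z|^8(1+c\cos 6\arg z)+|z|^{10}+\e|z|^2$; since $c>1$, in the cone of directions where $1+c\cos 6\arg z<0$ this is negative precisely when $\e_0(c-1)|z|^6-|z|^8>\e$, and maximizing $\e_0(c-1)r^6-r^8$ shows that once $\e$ is below a fixed threshold the set $L_{w_0}\cap D_\e$ is a non-empty bounded open subset of $L_{w_0}$ reaching up to its boundary $L_{w_0}\cap M_\e$. Choosing $w_0$ and a direction so that the corresponding boundary point of $D_\e\cap L_{w_0}$ avoids $\Sigma$, the side-preservation forces nearby interior points of $D_\e\cap L_{w_0}$ into $\mathbb{B}^N\cap H_{w_0}=\emptyset$, a contradiction; unless $F|_{L_{w_0}}$ is constant $\equiv F(0,w_0)$. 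Letting $w_0$ range over the unit circle then makes $F$ constant on every such line, and since these lines sweep out $\{|w|=1\}$ and $F$ thereby becomes a holomorphic function of $w$ alone that is constant on $|w|=1$, $F$ is globally constant, contradicting the choice of $F$.

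The main obstacle is the passage from "$F$ defined near an \emph{arbitrary} piece $U_\e$" to "a single-valued branch of $F$, holomorphic and side-preserving near a suitable boundary point of $D_\e\cap L_{w_0}$." One must control the branch/indeterminacy locus $\Sigma$ of the algebraic continuation — so that an arc of base points $(0,w_0)$, together with the relevant arcs of $M_\e$, lies outside $\Sigma$, and so that the orientation statement survives continuation — and rule out poles of the continuation on the relevant part of $D_\e$. The complexity of these bad sets is governed by the algebraic degree of $F$, which is bounded in terms of $N$; it is precisely this bound that forces the threshold $\e(N)$ in the statement to depend on $N$. An alternative route, which makes the $N$-dependence transparent, is to fix $N$, take $\e_k\downarrow 0$ with embeddings $F_k$ of uniformly bounded degree, normalize by automorphisms of $\mathbb{S}^{2N-1}$, and pass to a limit algebraic map from a piece of the Kohn--Nirenberg hypersurface $M_0$ into $\mathbb{S}^{2N-1}$, where $L_1\cap D_0\neq\emptyset$ outright and the Segre-line argument applies directly; the work there is to guarantee that the limit map is non-constant and that its poles avoid the point $(0,1)$. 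Carrying out either version rigorously — together with verifying the irreducibility of $\{\Theta=0\}$ and the orientation/side statement along the continuation — is the technical heart of the argument.
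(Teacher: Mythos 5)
Your outline has a genuine gap at exactly the step you yourself defer as ``the technical heart'': the passage from Huang's algebraicity theorem to a single-valued, side-preserving map. An algebraic $F$ is in general multi-valued, and everything downstream of that point --- the propagation of $\sum_j f_j(Z)\overline{f_j}(W)\equiv 1$ to the full complexification, the claim that the continuation ``still carries the $D_\e$-side into $\mathbb{B}^N$'', and the evaluation of the polarized identity along the whole line $L_{w_0}$ paired with the branch at $(0,w_0)$ --- presupposes that one fixed branch can be followed consistently. Controlling the monodromy is not a routine matter of perturbing bad sets: the paper devotes Lemma \ref{le21}, Lemma \ref{222} and Proposition \ref{lem4} to it. There one shows that for a loop $\gamma$ around a branching hypersurface $\mathcal{S}$ one can construct a companion null-homotopic loop $\lambda$ with $(\lambda,\overline{\gamma})\subset \mathcal{M}_\e$ (using the explicit graphing $\eta=\varphi(\overline{p},\xi)$ of the Segre varieties), continues the complexified identity along $(\gamma,\overline{\lambda})$ and $(\lambda,\overline{\gamma})$ to get $F_{2}(p)\in Q'_{F(p)}\cap Q'_{F_2(p)}\cap\partial\mathbb{B}^N=\{F(p)\}$, hence $F_2\equiv F$; the exceptional case where the only branching locus is $\{w=0\}$ is treated separately via transversality of $\{w=0\}$ with $M_\e$ and Proposition 3.10 of [HZ]. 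Only after this is $F$ rational, and only then does Chiappari's theorem give holomorphy on $\overline{D_\e}$ and properness into the ball. Without this step your argument does not close.

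Granting rationality, your Segre-line endgame is correct and is genuinely different from the paper's: $Q_{(0,w_0)}=\{w=w_0\}$ does meet $D_\e$ once $\e$ is below the fixed maximum of $\e_0(c-1)r^6-r^8$ (since $c>2>1$), and then $1=\sum_j F_j(z,w_0)\overline{F_j(0,w_0)}$ contradicts $\|F(z,w_0)\|<1$ on $L_{w_0}\cap D_\e$. The paper instead proves uniform bounds on $\deg F^{\e}$ (via Segre varieties and Cramer's rule) and on its coefficients (via root estimates for nonvanishing polynomials), passes to a limit rational map on the Kohn--Nirenberg hypersurface $M_0$, and invokes Theorem 3.6 of [HZ] there; your direct route would avoid all of that and would in fact yield a threshold $\e$ independent of $N$, so your explanation that the degree bound ``forces'' the $N$-dependence is not right for your own argument. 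Your ``alternative route'' in the final paragraph is essentially the paper's actual proof, but again with all substantive steps (degree bound, coefficient bound, non-degeneracy of the limit via the normalization $F(0)=0$) left unproved.
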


Theorem \ref{th1} does not give yet a negative answer to the above
Open Question. However, it shows at least  that  the Whitney (or
Remmert ) type embedding theorem in differential topology (or in the
Stein space theory, respectively) does not hold  in the setting
considered in this Open Question. We notice that $M_\e$ can always
be embedded into a generalized sphere with one negative Levi
eigenvalue.  Indeed, this embedding  property  is a special case of
a general result of Webster [We] which concerns the holomorphic
embeddability of  an algebraic strongly pseudo-convex hypersurface
into a generalized sphere with one negative Levi eigenvalue. Since
the Segre families of generalized spheres with the same dimension are
biholomorphic to each other, we see that the Segre family of $M_\e$
can be holomorphically Segre-embedded into the Segre family of the
sphere in ${\mathbb C}^6$. We will explain this in more detail in
Remark 2.12.

\medskip
 Our proof is based on   the algebraicity theorem
in \cite{Hu} and  the work in Huang-Zaitsev [HZ], where it was shown
that $M_\e$ can not be embedded into any sphere when
$\e=0$. Unfortunately, the compact smooth algebraic hypersurface
$M_\e$ with $\e=0$ has Kohn-Nirenberg points [KN] which are
non-strongly pseudo-convex points. Our family of compact strongly
pseudoconvex hypersurfaces are small algebraic perturbation of the
Kohn-Nirenberg type domain $M_0$. Other main ideas in the paper
include the Segre variety technique developed in [HZ] to show the
rationality for a certain class of algebraic maps.

\section{Proof of Theorem \ref{th1}}

We divide the proof into many  small lemmas for clarity of the
exposition.

 We first fix needed notations. Let $M \subset U(\subset
\mathbb{C}^{n})$ be a closed real-analytic subset defined by a
family of real-valued real analytic functions
$\{\rho_{\alpha}(Z,\overline{Z})\},$ where $Z$ is the coordinates of
$\mathbb{C}^{n}.$ Assume that the complexification
$\rho_{\alpha}(Z,W)$ of $\rho_{\alpha}(Z,\overline{Z})$ is
holomorphic over $U \times conj(U)$ with
$$conj(U):=\{W:\overline{W} \in U\}$$
for each $\alpha.$ Then the complexification $\mathcal{M}$ of $M$ is
the complex-analytic subset in $U \times conj(U)$ defined by
$\rho_{\alpha}(Z,W)=0$ for each $\alpha.$  Then for $W \in
\mathbb{C}^{n},$ the Segre variety of $M$ associated with the point
$W$ is defined by $Q_{W}:=\{Z:(Z,\overline{W}) \in \mathcal{M}\}.$
In what  follows, we will write $\mathcal{M}_{\e}$ for the
complexification of $M_{\e}$ and write $\mathcal{M}'$ for
the complexification of $\partial \mathbb{B}^{N}.$ Similarly, we
will write $Q^{\e}_{p}$ for the Segre variety of
$M_{\e}$ associated with the point $p,$ and write $Q'_{q}$
for the Segre variety of $\partial \mathbb{B}^{N}$ associated with
the point $q.$ For any $p \in \mathbb{C}^{2},$  write
$p=(z_{p},w_{p})$ or $p=(\xi_{p},\eta_{p})$. The following lemma
proved in  [HZ] will be used in this paper:
\begin{lemma} \label{le21}
Let $U \subset \mathbb{C}^{n}$ be a simply connected open subset and
$\mathcal{S} \subset U$ be a closed complex analytic subset of
codimension one. Then for $p \in U \setminus \mathcal{S},$ the
fundamental group $\pi_{1}(U \setminus \mathcal{S},p)$ is generated
by loops obtained by concatenating (Jordan) paths
$\gamma_{1},\gamma_{2},\gamma_{3},$ where $\gamma_{1}$ connects $p$
with a point arbitrary close to a smooth point $q_{0} \in
\mathcal{S},$ $\gamma_{2}$ is a loop around $\mathcal{S}$ near
$q_{0}$ and $\gamma_{3}$ is $\gamma_{1}$ reversed.
\end{lemma}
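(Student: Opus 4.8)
The plan is to reduce the statement to the classical description of the fundamental group of the complement of a \emph{smooth} complex hypersurface, and then to obtain the asserted generators by a general-position argument applied to a disk that fills in a given loop. First I would pass to the smooth part of $\mathcal{S}$. We may assume $\mathcal{S}$ has pure codimension one, since any irreducible components of complex codimension $\ge 2$ — that is, real codimension $\ge 4$ — may be deleted without changing $\pi_{0}$ or $\pi_{1}$ of the complement. Let $\mathcal{S}_{\mathrm{sing}}$ be the singular locus of $\mathcal{S}$ and $Z:=\mathcal{S}\setminus\mathcal{S}_{\mathrm{sing}}=\mathcal{S}_{\mathrm{reg}}$. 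Then $\mathcal{S}_{\mathrm{sing}}$ is a complex-analytic subset of $U$ of complex codimension $\ge 2$, hence of real codimension $\ge 4$, so by general position the inclusion $U\setminus\mathcal{S}_{\mathrm{sing}}\hookrightarrow U$ is a bijection on $\pi_{0}$ and an isomorphism on $\pi_{1}$. Consequently $V:=U\setminus\mathcal{S}_{\mathrm{sing}}$ is connected and simply connected, $Z$ is a closed (in $V$) complex submanifold of $V$ of codimension one, and $U\setminus\mathcal{S}=V\setminus Z$. It thus suffices to prove that $\pi_{1}(V\setminus Z,p)$ is generated by loops of the form $\delta\cdot\ell\cdot\delta^{-1}$, where $\delta$ joins $p$ to a point arbitrarily close to some $q_{0}\in Z$ and $\ell$ is a small loop around $Z$ near $q_{0}$.

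Next, fix a loop $\gamma$ in $V\setminus Z$ based at $p$; after a homotopy we may take $\gamma$ smooth. Since $V$ is simply connected, $\gamma$ extends to a smooth map $f\colon\overline{\mathbb{D}}\to V$ with $f|_{\partial\mathbb{D}}=\gamma$. Because $\gamma$ avoids $Z$, the relative transversality theorem lets us perturb $f$, keeping it unchanged near $\partial\mathbb{D}$, so that $f$ becomes transverse to $Z$. As $Z$ has real codimension $2$ in $V$, the set $f^{-1}(Z)$ is then a compact $0$-dimensional submanifold of $\operatorname{int}\mathbb{D}$, i.e.\ a finite set $\{x_{1},\dots,x_{k}\}$.

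Now choose pairwise disjoint closed disks $B_{i}\ni x_{i}$ in $\operatorname{int}\mathbb{D}$ with $B_{i}\cap f^{-1}(Z)=\{x_{i}\}$, and pairwise disjoint arcs $a_{i}\subset\overline{\mathbb{D}}\setminus\bigcup_{j}\operatorname{int}B_{j}$ from the basepoint of $\partial\mathbb{D}$ to $\partial B_{i}$. The surface $\overline{\mathbb{D}}\setminus\bigcup_{i}\operatorname{int}B_{i}$ deformation retracts onto a wedge of the circles $\partial B_{i}$ (slid to the basepoint along the $a_{i}$), and in its fundamental group $[\partial\mathbb{D}]=\prod_{i}[\,a_{i}\cdot\partial B_{i}\cdot a_{i}^{-1}\,]$ for an appropriate ordering and orientation. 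Since $f$ maps this surface into $V\setminus Z$, applying $f_{*}$ yields
\[
[\gamma]\;=\;\prod_{i}\big[\,\delta_{i}\cdot\ell_{i}\cdot\delta_{i}^{-1}\,\big]\qquad\text{in }\pi_{1}(V\setminus Z,p),
\]
with $\delta_{i}:=f\circ a_{i}$ and $\ell_{i}:=f\circ\partial B_{i}$. Shrinking each $B_{i}$, the endpoint of $\delta_{i}$ lies arbitrarily close to $q_{i}:=f(x_{i})\in Z$ and $\ell_{i}$ lies in an arbitrarily small neighborhood of $q_{i}$. Finally, in holomorphic coordinates near $q_{i}$ with $Z=\{z_{1}=0\}$, transversality of $f$ to $Z$ at $x_{i}$ means exactly that $0$ is a regular value of $z_{1}\circ f$ near $x_{i}$ with unique preimage $x_{i}$; hence $z_{1}\circ f|_{\partial B_{i}}\colon S^{1}\to\mathbb{C}\setminus\{0\}$ has winding number $\pm1$, so $\ell_{i}$ is a loop going once around $Z$ near $q_{i}$ (replace $\ell_{i}$ by its reverse if the sign is $-1$; this generates the same subgroup). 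Since $V$ has real dimension $2n\ge4$, a small generic perturbation makes each $\delta_{i}$ an embedded (Jordan) arc and each $\ell_{i}$ a round circle, so $\delta_{i}\cdot\ell_{i}\cdot\delta_{i}^{-1}$ has precisely the form in the statement. As $\gamma$ was arbitrary, the lemma follows.

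The conceptual crux is the reduction in the first step: one needs that $\mathcal{S}_{\mathrm{sing}}$ has \emph{real} codimension at least $4$, so that simple connectivity of $U$ is inherited by $V=U\setminus\mathcal{S}_{\mathrm{sing}}$. Once one is reduced to the smooth divisor $Z$, the remaining steps are the standard computation of the fundamental group of a divisor complement; the transversality step and the local-coordinate bookkeeping identifying the $\ell_{i}$ as honest meridians of $\mathcal{S}$ are routine and carry no real difficulty.
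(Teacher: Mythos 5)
Your proof is correct, and it is essentially the standard argument that the paper invokes by citing [HZ]: fill the loop with a disk in $U$ (after discarding the singular locus, which has real codimension at least $4$), make the disk transverse to the smooth part of $\mathcal{S}$ rel boundary, and read off the meridian decomposition from the punctured disk. No substantive gap.
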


Making use of the above lemma, we next prove the following  lemme:
(Notice that a local but a  general version of this result played an
important  role in the paper  [HZ].)
\begin{lemma} \label{222}
 Let $M_\e$ be defined as in (\ref{equhym}) with $p_0$ in
$M_\e$. Let $\mathcal{S}$ be a complex analytic
hyper-variety in $\mathbb{C}^2$ not containing $p_0$.
 Let
$\gamma\in\pi_1(\mathbb{C}^2\setminus \mathcal{S}, p_0)$ be obtained
by concatenation of $\gamma_1,\gamma_2,\gamma_3$ as described  in
Lemma \ref{le21}, where $\gamma_2$ is a small loop around
$\mathcal{S}$ near a smooth point $q_0\in \mathcal{S}$ with
$w_{q_0}\not =0$.  Then $\gamma$ can be slightly and homopotically
perturbed to a loop $\wt{\gamma} \in \pi_1(\mathbb{C}^2\setminus
\mathcal{S}, p_0)$ such that there exists a null-homotopic loop
$\lambda\in \pi_1(\mathbb{C}^2\setminus \mathcal{S}, p_0)$ with
$(\lambda, \overline{\wt\gamma})$ contained in the
complexification $\mathcal{M}_\e$ of $M_\e$. Also,
for an element $\hat{\gamma} \in \pi_{1}({\mathbb C}^2\setminus
\mathcal{S}, p_0)$ with a similar property described above,  after a
small perturbation to $\hat{\gamma}$ if needed, we can find a
null-homotopic loop in $\hat{\lambda} \in \pi_{1}({\mathbb C}^2
\setminus \mathcal{S}, p_0)$ such that
$(\hat{\gamma},\overline{\hat{\lambda}}) \subset
\mathcal{M}_{\e}.$
\end{lemma}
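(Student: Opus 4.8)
The plan is to construct $\lambda$ by hand as a loop lying in the single affine complex line $L:=\{z=z_{p_0}\}\ss\CC^2$: for each $t$ we take $\lambda(t)$ to be the unique point of the Segre variety $Q^{\e}_{\wt\gamma(t)}$ lying over $z_{p_0}$. Complexifying the defining function in (\ref{equhym}) and writing $Z=(z,w)$, $W=(\xi,\eta)$, one obtains
\begin{equation}\label{eqcplx}
\rho(Z,W)=\varepsilon_0\Big(z^4\xi^4+\tfrac c2\big(z^7\xi+z\xi^7\big)\Big)+w\eta+z^5\xi^5+\e\,z\xi-1 ,
\end{equation}
which is symmetric under $Z\leftrightarrow W$ and has real coefficients; hence $\mathcal{M}_{\e}$ is invariant under $(Z,W)\mapsto(W,Z)$ and under $(Z,W)\mapsto(\bar Z,\bar W)$. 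The decisive structural point is that $\rho$ is affine linear in $\eta$: for every $p=(\xi_p,\eta_p)$ with $\eta_p\ne 0$ the Segre variety $Q^{\e}_{p}$ is the graph of a polynomial $w=f_p(z)$ depending anti-holomorphically on $p$, and it is this that will keep the lift single-valued along paths.

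Reduction of the second assertion to the first. Composing the two symmetries of $\mathcal{M}_{\e}$ just noted gives $(\hat\gamma,\overline{\hat\lambda})\ss\mathcal{M}_{\e}\iff(\hat\lambda,\overline{\hat\gamma})\ss\mathcal{M}_{\e}$, so it suffices, after a small perturbation of $\hat\gamma$, to produce a null-homotopic $\hat\lambda$ with $\hat\lambda(t)\in Q^{\e}_{\hat\gamma(t)}$ for all $t$ — which is exactly the first assertion with $\hat\gamma$ playing the role of $\wt\gamma$. From now on I treat only the first assertion.

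Construction and the null-homotopy. We may assume $w_{p_0}\ne 0$ (since $M_\e\cap\{w=0\}$ is nowhere dense in $M_\e$, in the application $p_0$ is free to be chosen off it; the general case reduces to this by a routine modification near the endpoints, which I suppress). Perturb $\gamma$ to $\wt\gamma=\wt\gamma_1\cdot\wt\gamma_2\cdot\wt\gamma_1^{-1}$, with $\wt\gamma_2$ a sufficiently small loop around $\mathcal S$ based near $q_0$ and $\wt\gamma_1$ a small generic perturbation of $\gamma_1$; because $\{w=0\}$ has real codimension two and $w_{q_0}\ne 0$, $w_{p_0}\ne 0$, we may arrange $w_{\wt\gamma}(t)\ne 0$ for all $t$ without leaving the homotopy class of $\gamma$ in $\CC^2\sm\mathcal S$. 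Writing $\wt\gamma(t)=(a(t),b(t))$, put $\lambda(t):=(z_{p_0},\omega(t))$, where $\omega(t)$ is the unique solution of the equation $\rho\big(\lambda(t),\overline{\wt\gamma(t)}\big)=0$ — linear in $\omega(t)$ with coefficient $\overline{b(t)}\ne0$ — namely
\[
\omega(t)=\frac{1}{\overline{b(t)}}\Big(1-\varepsilon_0 z_{p_0}^4\,\overline{a(t)}^4-\tfrac{c\varepsilon_0}{2}\,z_{p_0}^7\,\overline{a(t)}-\tfrac{c\varepsilon_0}{2}\,z_{p_0}\,\overline{a(t)}^7-z_{p_0}^5\,\overline{a(t)}^5-\e\,z_{p_0}\,\overline{a(t)}\Big).
\]
By construction $(\lambda,\overline{\wt\gamma})\ss\mathcal{M}_{\e}$, and since $\rho(p_0,\overline{p_0})=0$ a direct computation gives $\omega(0)=\omega(1)=w_{p_0}$, so $\lambda$ is a loop based at $p_0$. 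Now $\omega(t)$ depends only on $\wt\gamma(t)$, so the part of $\lambda$ over $\wt\gamma_1^{-1}$ is the reverse of the part over $\wt\gamma_1$; and since $w_{q_0}\ne 0$ and $\wt\gamma_2$ is small, $b(t)$ stays away from $0$ on $\wt\gamma_2$, so the part $\lambda_2$ of $\lambda$ over $\wt\gamma_2$ is a small loop contained in a small ball about the point $(z_{p_0},\omega_0)$ (the limiting value of $\omega$ as $\wt\gamma_2$ shrinks). Hence $\lambda\simeq\lambda_1\cdot\lambda_2\cdot\lambda_1^{-1}$ is conjugate to $\lambda_2$, and provided that ball misses $\mathcal S$, $\lambda_2$ — and therefore $\lambda$ — is null-homotopic in $\CC^2\sm\mathcal S$.

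The one step that needs care — and the main obstacle — is choosing the perturbation of $\gamma_1$ so that simultaneously $w_{\wt\gamma}$ never vanishes, the path $t\mapsto\omega(t)$ over $\wt\gamma_1$ avoids the finite set $\mathcal S\cap L$, and $(z_{p_0},\omega_0)\notin\mathcal S$, all without changing the homotopy class of $\gamma$ in $\CC^2\sm\mathcal S$ or destroying the $\gamma_1\gamma_2\gamma_1^{-1}$ structure. These are genericity/transversality statements: $\{w=0\}$ and $\mathcal S\cap L$ have real codimension $\ge 2$ in the relevant spaces, and the endpoints involved ($p_0$, and points near $q_0$) already avoid them, so a small generic perturbation of $\gamma_1$ — with $q_0$ allowed to slide slightly within the smooth locus of $\mathcal S$ so as to move $\omega_0$ off $\mathcal S$ — realizes all three conditions. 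Carrying out these transversality arguments carefully, together with the $w_{p_0}=0$ edge case, is the only non-formal ingredient of the proof.
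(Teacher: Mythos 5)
Your construction is essentially the paper's: lift $\wt\gamma$ into the Segre varieties $Q^{\e}_{\wt\gamma(t)}$ by selecting the point with a prescribed first coordinate, note that the lift of the small loop $\wt\gamma_2$ is itself a small loop clustering at a single point $(z_{p_0},\omega_0)$, and conclude null-homotopy of $\lambda$ \emph{provided} that point lies off $\mathcal S$. The only structural difference is that you freeze the first coordinate at $z_{p_0}$ throughout, whereas the paper works with the whole family $\mathcal R_{\xi}(z,w)=(\xi,\varphi(\bar z,\bar w,\xi))$ and lets $\xi$ run along a path $\xi(t)$ over $\wt\gamma_1$; that is a cosmetic difference. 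The genuine gap is precisely in the step you flag as ``the only non-formal ingredient'': arranging $(z_{p_0},\omega_0)\notin\mathcal S$. This is \emph{not} a transversality statement, and your proposed remedy --- slide $q_0$ within the smooth locus of its branch so as to move $\omega_0$ off $\mathcal S$ --- can fail outright, because for a fixed $\xi_0=z_{p_0}$ the anti-holomorphic map $q\mapsto\mathcal R_{\xi_0}(q)$ may be \emph{constant} on the entire branch, with value in $\mathcal S$. Concretely, if $z_{p_0}=0$ then $\varphi(\bar z,\bar w,0)=1/\bar w$, so a branch $\{w=c\}$ with $|c|=1$ is collapsed to the single point $(0,c)\in\mathcal S$; every lift $\lambda_2$ of $\wt\gamma_2$ is then a small loop in the line $\{z=0\}$ encircling the point $(0,c)$ of $\mathcal S\cap\{z=0\}$, i.e.\ a meridian of $\mathcal S$, which is in general \emph{not} null-homotopic in $\CC^2\sm\mathcal S$ --- no matter how $q_0$ slides or how small $\wt\gamma_2$ is taken. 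More generally, for any fixed $\xi_0$ the level sets of $(z,w)\mapsto\varphi(\bar z,\bar w,\xi_0)$ are algebraic curves, and nothing in your argument excludes that the relevant branch of $\mathcal S$ is one of them with critical value landing in $\mathcal S\cap\{z=\xi_0\}$.

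Filling this hole is the actual content of the paper's proof. There one shows a claim: away from a nowhere dense closed set of values $\xi$, and for generic smooth $q$ in the branch, $\mathcal R_{\xi}$ maps a small ball around $q$ into a ball missing $\mathcal S$. The proof of the claim uses the full $\xi$-family in an essential way: if it failed for all $\xi$ in an open set, then $\mathcal R_{\xi}(E)\ss\mathcal S$ for a smooth piece $E$ of the branch and for \emph{all} $\xi$ by analytic continuation, and specializing to $\xi=0$ (where $\mathcal R_0(z,w)=(0,1/\bar w)$) forces either $\{z=0\}\ss\mathcal S$ or $E\ss\{w=\mathrm{const}\}$, and each alternative is then ruled out by letting $\xi$ vary again. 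Having the claim, one perturbs $p_0$ inside $M_{\e}$ so that $z_{p_0}$ is one of the good values of $\xi$. Without this ingredient (or a substitute for it), your argument does not close; the remaining points of your write-up --- the explicit formula for $\omega(t)$, the avoidance of the finite set $\mathcal S\cap L$ along $\lambda_1$ by a generic perturbation of $\wt\gamma_1$, and the reduction of the second assertion to the first via the two symmetries of $\mathcal M_{\e}$ --- are all fine and agree with the paper.
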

\begin{proof}
 First notice the fact that $Q_{p}^\e$ is  smooth when $w_{p} \neq
 0$ defined by $\eta=\varphi(\-{p},\xi)$ with $\xi\in {\mathbb C}^2$,
 where $\varphi$ is as in (\ref{111}) below:
\begin{equation}\label{111}
\varphi(\-{p},\xi)=\varphi(\overline z_p, \overline w_p,
\xi)=-\frac{\varepsilon_0(\xi^4\overline z_p^4+\frac
c2(\xi^7\overline z_p+\xi\overline z_p^7))+\xi^5\overline
 z_p^5+\e\xi\overline z_p-1}{\overline w_p},
\end{equation}

 Moreover, for any $q_{1} \neq q_{2}\in {\mathbb C}^2$ with $w_{q_{1}} \neq 0, w_{q_{2}} \neq 0$ and for any $U \subset \mathbb{C}^{2},$
 $ Q^{\e}_{q_{1}} \not\equiv Q^{\e}_{q_{2}}$ in $U$ unless they both are empty subset.
 After slightly perturbing   $p_0$ in $M_\e$, if needed, we can assume without loss of generality  that
 $w_{p_0}\not = 0$.

 Now for any   $\xi\in {\mathbb C}$,
we define a map ${\cal R}_{\xi}(z,w)=(\xi, \varphi(\overline z, \overline
w, \xi))$ from $\mathbb{C}^{2}\sm \{w\not = 0\}$ into
$\mathbb{C}^{2},$
  which is
anti-holomorphic in $(z,w)$ for $w\neq 0$ and is real analytic in
all variables away from $w=0$. Also, if we write
$p_0=(\xi_{p_0},\eta_{p_0})$, then $(\xi_{p_0}, \varphi(\-{p_0},
\xi_{p_0}))=p_0$ and thus ${\cal R}_{\xi_{p_0}}(p_0)=p_0$. From the
defintion, we see that ${\cal R}_{\xi}$ sends $(z,w)$ to
$Q_{(z,w)}^{\e}.$

We claim that, possibly away  from a certain nowhere dense closed
subset in ${\mathbb C}$ for $\xi$,  for a generic smooth point $q$
in the irreducible branch of $ {\mathcal S}$ containing $q_0$ as in
the lemma, there is a sufficiently small ball $\Omega_q$ centered at
$q$ (whose size may depend on $q$) such that ${\cal R}_{\xi}$ maps
$\Omega_q$ into a small open ball $B_q$ with $B_q\cap
\mathcal{S}=\emptyset$. Suppose not. Then we have a smooth piece $E$
from the branch described above of ${\mathcal S}$  such that
${\cal R}_{\xi}(E)$ is contained in ${\mathcal S}$ for any $\xi$ in a
certain open subset first and then for all $\xi $  by the uniqueness
of analytic functions. Letting $\xi=0$, we see that the branch
containing all these images must be defined by $z=0$ unless $E$ is
defined by $w=constant$. However, if the branch containing $E$ is
defined by $w=constant$, by making $\xi\not = 0$, we easily see that
the union of ${\cal R}_{\xi}(E)$ as $\xi$ varies occupies an open subset of
${\mathbb C}^2$. This is a contradiction again.

Now, we fix a $\xi_{0}$ as in the above claim and also assume without
loss of generality that $\xi_{0}$ is the first coordinate $\xi_{p_{0}}$ of $p_0$ (for
we are certainly always allowed to perturb $p_0$ inside $M_\e$ to
achieve this). Back to our loop $\gamma$, we now deform
 $\gamma_1, \gamma_2,
\gamma_3$  to $\wt\gamma_1, \wt\gamma_2, \wt\gamma_3$ respectively.
Here $\wt \gamma_1$ connects $p_0$ with a point  $q^*$ in a small
ball $\Omega$ centered at a certain smooth point $q\in {\mathcal
S}\approx q^{*}$, $\wt\gamma_2$ is a loop based at $q^*$ around
$\mathcal{S}$ inside $\Omega$ and sufficiently close to $q$, and
$\wt\gamma_3$ is $\wt\gamma_1$ reserved such that the loop
$\wt\gamma$ obtained by concatenation of $\wt\gamma_1,
\wt\gamma_2,\wt\gamma_3$ is the same as $\gamma$ as elements
in $\pi_1(\mathbb{C}^2\setminus \mathcal{S}, p_0)$. Moreover,
${\cal R}_{\xi}(\Omega)$ is contained in a ball not cutting $\mathcal S$.
Also, we  assume that the $w$-coordinate of points in $\wt\gamma(t)$
never vanishes.
 Now define $\lambda_{2}={\cal R}_{\xi_{0}}(\wt\gamma_2)$.
We  choose a suitable path $\{\xi(t):0 \leq t \leq 1\}$ in $\mathbb{C}$ with $\xi(0)=\xi(1)=\xi_{0}$ such that if  we
define $\lambda_1={\cal R}_{\xi(t)}(\wt\gamma_1),$ then $\lambda_{1}$
avoids $\mathcal{S}$(with possibly a slight perturbation of $\wt\gamma_{1}$ fixing endpoints). Furthermore, if we define $\lambda_3$ to be
the reverse of $\lambda_1,$ and $\lambda$ to be the concatenation of
$\lambda_1, \lambda_2, \lambda_3,$ then $\lambda$ is a
null-homotopic loop in $\pi_1(\mathbb{C}^2\setminus \mathcal{S},
p_0).$ Moreover, $(\lambda(t), \overline{\wt\gamma(t)})$ is in
the complexification $\mathcal{M}_{\e}$ of
$M_{\e}$ by the way it was constructed. The last statement
in the lemma follows from the symmetric property of Segre variety
and what we just proved.
\end{proof}

\begin{proposition}\label{lem4}
 For an $\e$ with  $ 0<\e<1$,  assume that $F$ is non-constant holomorphic
 map from an open piece of
 $M_\e$ into $\partial\mathbb{B}^N$ ($N \in \mathbb{N}).$
Then $F$ extends to  a proper rational  map from $D_\e$ into
${\mathbb B}^N$, holomorphic over $\-{D_\e}$.
\end{proposition}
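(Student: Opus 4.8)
The plan is the familiar three–stage argument: algebraize $F$; kill its monodromy by a Segre–reflection argument so that it becomes rational; and then use the strong pseudoconvexity of $D_\e$ to promote the rational extension to one holomorphic on $\overline{D_\e}$ and proper onto $\mathbb B^N$. \textbf{Algebraization.} Since $M_\e$ is real algebraic and strongly pseudoconvex and $\partial\mathbb B^N$ is real algebraic, the algebraicity theorem of \cite{Hu} shows that each component of the given non-constant holomorphic map $F$ on an open piece $U_\e$ of $M_\e$ is an algebraic function of $Z=(z,w)$. Hence $F$ continues as a finitely-valued holomorphic map along paths in $\mathbb C^2\setminus\mathcal S$ for a complex algebraic hypersurface $\mathcal S$ (the branch-and-pole locus of the relevant minimal polynomials); fix a base point $p_0\in U_\e$ at which $F$ is holomorphic and $w_{p_0}\neq0$. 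It suffices to show that the monodromy of this algebraic continuation is trivial, for then $F$ is single-valued and algebraic off $\mathcal S$, hence rational. Replacing $N$ by a smaller integer if necessary (projecting $\partial\mathbb B^N$ onto the smallest affine-linear sphere containing $F(M_\e)$), we may also assume that $1,F_1,\dots,F_N$ are $\mathbb C$-linearly independent as germs at $p_0$.

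\textbf{Triviality of the monodromy.} By Lemma \ref{le21} it is enough to treat a loop $\gamma=\gamma_1\gamma_2\gamma_3$ with $\gamma_2$ a small loop around a smooth point $q_0\in\mathcal S$; since the smooth points with $w_{q_0}\neq0$ are dense in every component of $\mathcal S$ not contained in $\{w=0\}$, we first assume $w_{q_0}\neq0$. The complexified sphere relation gives $\sum_jF_j(Z)\overline{F_j}(\zeta)\equiv1$ on the germ of $\mathcal M_\e$ at $(p_0,\overline{p_0})$. By Lemma \ref{222}, after a homotopic perturbation of $\gamma$ to $\wt\gamma$ we obtain a null-homotopic loop $\lambda$ with $(\lambda,\overline{\wt\gamma})\subset\mathcal M_\e$; continuing the identity along this path inside $\mathcal M_\e$ — the first slot returns to $F$ because $\lambda$ is null-homotopic, the second to the monodromy branch $F^\gamma$ — yields $\sum_jF_j(Z)\overline{F^\gamma_j}(\zeta)\equiv1$ on the germ of $\mathcal M_\e$. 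Subtracting the original relation and writing it out along Segre varieties, we get $\sum_j c_j(p)\,F_j(Z)=0$ for all $Z\in Q^\e_p$ and all $p$ near $p_0$, where $c(p)=\overline{F^\gamma(p)}-\overline{F(p)}$. Since $F(Q^\e_p)$ lies in the affine hyperplane $Q'_{F(p)}$, which does not pass through the origin, the non-degeneracy arranged above, together with the structure of the Segre varieties of $M_\e$, forces (for generic $p$, hence for $p=p_0$ after a further perturbation) that $F(Q^\e_p)$ lies in no hyperplane through the origin, so $c(p)\equiv0$ and $F^\gamma\equiv F$. The generators coming from components of $\mathcal S$ inside $\{w=0\}$ — where the Segre varieties degenerate into unions of $\xi$-fibers and Lemma \ref{222} no longer applies — must be handled separately, e.g. by restricting $F$ to the disc $D_\e\cap\{w=0\}$ and using the single-valuedness already established along the other generators, or by arguing directly that $F$ does not branch there. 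Coordinating these Segre reflections with the monodromy (in particular verifying that the branches produced along the way are non-degenerate enough for the rigidity step) and disposing of the degenerate locus $\{w=0\}$ is the step I expect to be the main obstacle.

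\textbf{Extension to $\overline{D_\e}$ and properness.} With $F=(P_1,\dots,P_N)/Q$ now rational, the identity $\sum_j|P_j|^2=|Q|^2$ valid on $U_\e$ extends to the whole connected real-analytic hypersurface $M_\e$; equivalently the (reduced) defining polynomial $\rho_\e$ of $M_\e$ divides $|Q(Z)|^2-\sum_j|P_j(Z)|^2$, so this quantity equals $a(Z,\overline Z)\,\rho_\e(Z,\overline Z)$ for a real polynomial $a$. Since $F$ is holomorphic on $U_\e$ and algebraic, it is holomorphic on a full neighborhood of $U_\e$, and, $M_\e$ being connected, on a neighborhood of all of $M_\e$; there the plurisubharmonic function $\|F\|^2$ equals $1$ on $M_\e$ and hence is $<1$ on the inner side near $M_\e$ (strictly, since $\|F\|^2\equiv1$ on an open subset of $D_\e$ would force $F$ constant), so on that inner side $|Q|^2(1-\|F\|^2)>0$ while $\rho_\e<0$, i.e. $a<0$. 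Using that $D_\e$ is a bounded strongly pseudoconvex domain, one propagates this sign inward, via the plurisubharmonic exhaustion and the maximum principle, to conclude that $Q$ has no zeros on $\overline{D_\e}$ and $\|F\|<1$ on $D_\e$. Therefore $F$ is holomorphic on a neighborhood of $\overline{D_\e}$, maps $D_\e$ into $\mathbb B^N$, and maps $M_\e=\partial D_\e$ into $\partial\mathbb B^N$; hence $F$ is a proper rational holomorphic map from $D_\e$ into $\mathbb B^N$, holomorphic over $\overline{D_\e}$, as claimed.
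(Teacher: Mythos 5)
Your overall architecture (algebraicity via [Hu], monodromy killed by a Segre/reflection argument built on Lemmas \ref{le21} and \ref{222}, then boundary extension) matches the paper, but both of your key steps have genuine gaps. In the monodromy step for $w_{q_0}\neq 0$, you perform only one continuation, arrive at $\sum_j c_j(p)F_j(Z)=0$ on $Q^\e_p$ with $c(p)=\overline{F^\gamma(p)}-\overline{F(p)}$, and then assert that a linear-independence normalization of $1,F_1,\dots,F_N$ forces $F(Q^\e_p)$ to lie in no hyperplane through the origin. That does not follow: even if $F$ is linearly non-degenerate as a map, its restriction to a single one-dimensional Segre variety can perfectly well be linearly degenerate (this is exactly the dichotomy $k=N-1$ versus $k<N-1$ that the paper must allow for in Lemma \ref{lem5}), so you cannot conclude $c(p)\equiv 0$ this way. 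The paper avoids this entirely by a second continuation: Lemma \ref{222} is stated symmetrically precisely so that one can continue along $(\gamma,\overline{\lambda})$ to get $F_2(p)\in Q'_{F(p)}$ \emph{and} along $(\lambda,\overline{\gamma})$ to get $F_2(p)\in Q'_{F_2(p)}$, i.e.\ $F_2(p)\in\partial\mathbb B^N$; the conclusion $F_2=F$ then comes from the elementary geometric fact $\partial\mathbb B^N\cap Q'_q=\{q\}$, with no non-degeneracy hypothesis at all. You should replace your hyperplane argument with this two-sided continuation.

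The second gap is the one you flag yourself: branching over $\{w=0\}$. Restricting to $D_\e\cap\{w=0\}$ or hoping the other generators suffice will not work when $\{w=0\}$ is the \emph{only} branching component, since then there are no other generators. The paper's resolution is specific: $\pi_1(\mathbb C^2\setminus\{w=0\})=\mathbb Z$ gives cyclic branching, $\{w=0\}$ meets $M_\e$ transversally at a point $p^*=(z_0,0)$, loops in $T^{(1,0)}_{p^*}M_\e$ near $p^*$ (deformable into $M_\e\setminus\{w=0\}$) recover all branches, and then Proposition 3.10 of [HZ] applies locally to give $F_1(Z)=F_2(Z)$ by the same $Q'\cap\partial\mathbb B^N$ argument. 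Finally, your boundary-extension paragraph is essentially an attempt to reprove Chiappari's theorem; the claim that a rational map holomorphic near one open piece of $M_\e$ is automatically holomorphic near all of $M_\e$ and pole-free on $\overline{D_\e}$ is exactly the nontrivial content of [Ch], which the paper simply cites, and your maximum-principle sketch does not establish it.
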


\begin{proof}

 By a theorem of the first author in  [Hu], $F$ is complex algebraic (possibly multi-valued).  In particular, any branch of $F$ can be holomorphically
  continued along a path not cutting a certain proper complex algebraic subset $\mathcal{S} \subset \mathbb{C}^{2}.$
  We  need only to prove the proposition assuming that $\mathcal{S}$ is a hyper-complex analytic variety.
 Seeking a contradiction, suppose not. Then we can
  find a point $p_{0} \in U\subset  M_{\e},p_{0}=(z_{0},w_{0})$ with $w_{0} \neq 0,$ a loop
   $\gamma \in \pi_{1}(\mathbb{C}^{2} \setminus \mathcal{S},p_{0})$ obtained by concatenation
    of $\gamma_{1},\gamma_{2},\gamma_{3}$ as in Lemma 2.1, where $\gamma_{2}$
    is a small loop around $\mathcal{S}$ near a smooth point $q_{0} \in \mathcal{S},$
     such that when we holomorphically continue $F$ from a neighborhood of $p_{0}$ along $\gamma$ one round,
      we will obtain another branch $F_{2}(\neq F)$ of $F$ near $p_{0}.$ Obviously, we can assume $q_{0}$ is a
      smooth point of some branching hypervariety $\mathcal{S}' \subset \mathcal{S}$ of $F.$ We next proceed  in two steps:

\medskip
{\bf{Case I:}}  If we can find a loop $\gamma$ as above such that the corresponding  $\mathcal{S}' \neq  \{w=0\},$ by perturbing $\gamma$ if
 necessary, we can make $w_{q_{0}} \neq 0.$  By Lemma \ref{222}, after slightly perturbing $\gamma$ if necessary, there exists a null-homotopic loop
  $\lambda$ in $\pi_{1}(\mathbb{C}^{2} \setminus \mathcal{S},p_{0})$ with $(\gamma,\overline{\lambda})$ contained in the complexification
  $\mathcal{M}_{\e}$ of $M_{\e}$ We know that $(F,\overline{F}):=(F(\cdot),\overline{F(\overline{\cdot})})$ sends a neighborhood
   of $(p_{0},\overline{p_{0}})$ in $\mathcal{M}_{\e}$ into $\mathcal{M}'.$ Applying the analytic continuation along the loop
    $(\gamma,\overline{\lambda})$ in $\mathcal{M}_{\e}$ for $\rho(F,\overline{F}),$ one concludes by the uniqueness of analytic
    functions
     that $(F_{2},\overline{F})$ also sends a neighborhood of $(p_{0},\overline{p_{0}})$ in $\mathcal{M}_{\e}$ into $\mathcal{M}'.$
     Consequently, we get $F_{2}(Q_{p}) \subset Q'_{F(p)}$ for  $p \in M_{\e}$ near $p_{0}.$ In particular,
     we have the following:
\begin{equation}\label{eqdiffb}
F_{2}(p) \in Q'_{F(p)}, \forall p \in M_{\e},~ p\approx p_{0}.
\end{equation}
Now applying the holomorphic continuation along the loop
$(\lambda,\overline{\gamma})$ in $\mathcal{M}_{\e}$ for
$\rho(F_{2},F),$ we get by uniqueness of analytic functions that
$(F_{2},\overline{F_{2}})$ sends a neighborhood of $(p_{0},\-{p_0})$
in $\mathcal{M}_{\e}$ into $\mathcal{M}'.$ Hence, we also
have
\begin{equation}\label{eqdiffb1}
F_{2}(p) \in Q'_{F_2(p)}, \forall p \in M_{\e},~ p\approx
p_{0}.
\end{equation}
In particular, $F_{2}(p)\in \p{\mathbb B}^N$.
Combining this with equation (\ref{eqdiffb}), and noting that for
any $q \in
\partial \mathbb{B}^{N}, \partial \mathbb{B}^{N} \cap Q'_{q}={q},$
we get $F_{2}(p)= F(p)$ for any $p \in M_{\e}$ near
$p_{0}.$ Thus $F_{2} \equiv F$ in a neighborhood of $p_{0}$ in
$\mathbb{C}^{2},$ which is a contradiction.

\medskip {\bf{Case II:}} \ Now, suppose  $ W:=\{w=0\}$ is the only branching locus of the algebraic extension of
$F$. Since $W$ is smooth and $\pi_1({\mathbb C}^2\setminus
W)={\mathbb Z}$, we  get the cyclic branching property for $F$. Now,
we notice that $W$ cuts $M_\e$ transversally at a certain point
$p^*=:(z_0,0)$.
When we will continue along loops inside $T^{(1,0)}_{p^*}M_\e$ near
$p^*$, we recover all branches of $F(z,w)$. Since any loop inside
$T^{(1,0)}_{p^*}M_\e$ near $p^*$ can be easily homotopically
deformed into loops in $M_\e$ near $p^*$, we conclude that
we recover
all branches of $F$ near $p^*$ by continuing any branch of $F$ near
$p^*$ along loops inside $M_\e\sm W$ near $p^*$. Hence, we are now
reduced to the local situation as encountered in Proposition 3.10 of
[HZ]. Hence, by Proposition 3.10 of [HZ], for  $Z(\not =)\approx p^*$
and two barnches $F_1$ and $F_2$ of $F$ near $Z$, we have $F_1(Z),
F_2(Z)\in Q'_{F_1(Z)}\cap Q'_{F_2(Z)}.$ As above, we see that
$F_1(Z)=F_2(Z)$.
We thus conclude that $F$ is single-valued.

 Since $F$ is algebraic, it is rational.
Once we know that $F$ is a rational map from $M_ \e $ into the
sphere, by a theorem of Chiappari [Ch], we know that $F$ extends to
a holomorphic map from a neighborhood of $\-{D_\e}$ and properly
maps $ {D_\e}$ into the ball. This completes the proof of
 Proposition \ref{lem4}.

\end{proof}
\bigskip

 Next we  recall the following
definition.

\begin{Definition}
Let $F$ be a rational map from $\mathbb{C}^{n}$ into
$\mathbb{C}^{m}.$ We write
$$F=\frac{(P_{1},\cdots,P_{m})}{R}$$
where $P_{j},j=1,\cdots,m$ and $R$ are holomorphic polynomials and
$(P_{1},\cdots,P_{m},R)=1.$ The degree of $F,$ denoted by
$\mathrm{deg}F,$ is defined to be
$$\mathrm{deg}F: =max\{\mathrm{deg}(P_{j}),j=1,\cdots,m,\mathrm{deg}R\}.$$
\end{Definition}

To emphasize on the dependence on the  parameter $\e,$ in
what   follows, we write $F^\e$ for a holomorphic map
 from a certain open piece of $M_\e$ into $\partial \mathbb{B}^{N}.$ By what we did above, $F^\e$ extends  to holomorphic map over a neighborhood
 of $\-{D_\e}$. The purpose
of the next three lemmas is to show the uniform boundedness of the
degree of $F^{\e}.$ We  mention a related article of Meylan
in [Mey] for the uniform estimate of degree for proper rational maps
between balls.

\begin{lemma}\label{lem5}
Let $F^{\e}$ be a proper rational map from  $D_\e$ into
${\mathbb B}^N$ holomorphic over $\-{D_\e}$. Then there is an open
piece $U$ of $M_\e$ such that for any $p\in U$ with $w_p\not =0$ and
we have $\mathrm{deg}F^\e |_{Q_{p}}\leq d,$ where
$d=\frac{7N(N+1)}{2}.$ Here we set $F^\e |_{Q_{p}}:=
F^\e(\xi, \phi(\-{z_p},\-{w_p},\xi))$ with
$\phi(\-{z_p},\-{w_p},\xi)$ as in (\ref{111}), which is a
holomorphic  polynomial function in $\xi$.

\end{lemma}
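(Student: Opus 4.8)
\textbf{Proof proposal for Lemma \ref{lem5}.}

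The plan is to bound the degree of the one-variable polynomial $F^\e|_{Q_p}(\xi) = F^\e(\xi, \varphi(\-{z_p},\-{w_p},\xi))$ by intersecting the graph of this restriction with the Segre variety $\mathcal{M}'$ of $\partial\mathbb{B}^N$ and counting. First I would recall, as in the proof of Proposition \ref{lem4}, that $(F^\e,\overline{F^\e})$ maps $\mathcal{M}_\e$ into $\mathcal{M}'$; equivalently, for $p, q$ near a reference point on $M_\e$ with $q \in Q^\e_p$, one has $F^\e(q) \in Q'_{F^\e(p)}$. Fix a generic $p_0 \in M_\e$ with $w_{p_0}\neq 0$ and let $U$ be a small open piece of $M_\e$ around it on which everything below is valid. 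For $p \in U$ with $w_p \neq 0$, the Segre variety $Q^\e_p$ is the smooth curve $\{(\xi,\varphi(\-{z_p},\-{w_p},\xi)) : \xi \in \mathbb{C}\}$, rationally parametrized by $\xi$, and $\varphi$ is a polynomial of degree $7$ in $\xi$ (the top term coming from $\tfrac{c}{2}\xi^7\overline z_p$ in \eqref{111}). Hence $\deg_\xi F^\e|_{Q_p}$ is controlled once we control how the numerator polynomials and denominator of $F^\e$ restrict, but the cleaner route is the intersection-theoretic one described next.

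The key step: fix a point $a = F^\e(p) \in \partial\mathbb{B}^N$ and consider the Segre variety $Q'_a = \{Z' \in \mathbb{C}^N : \langle Z', \overline a\rangle = 1\}$, a complex hyperplane in $\mathbb{C}^N$. Since $F^\e(Q^\e_p) \subset Q'_a$, the rational curve $\xi \mapsto F^\e(\xi,\varphi(\-{z_p},\-{w_p},\xi))$ lies in this hyperplane, so it does not help directly; instead I would vary the point. For each fixed $\xi^* \in \mathbb{C}$ let $q(\xi^*) = (\xi^*,\varphi(\-{z_p},\-{w_p},\xi^*)) \in Q^\e_p$; by the symmetry of Segre varieties, $p \in Q^\e_{q(\xi^*)}$, and applying the map relation at $q(\xi^*)$ gives $F^\e(p) \in Q'_{F^\e(q(\xi^*))}$, i.e. $\langle F^\e(p), \overline{F^\e(q(\xi^*))}\rangle = 1$ for all $\xi^*$. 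Writing $F^\e = (P_1,\dots,P_N)/R$ and substituting the parametrization, clearing denominators, we obtain a polynomial identity in $\xi^*$; but the real gain comes from counting the solutions of the system that forces $F_1(Z) = F_2(Z)$ collapse. The honest approach for the degree bound is: a proper rational map $\mathbb{B}^2 \to \mathbb{B}^N$ has numerator and denominator of degree at most $N(N+1)/2$ by the Bezout-type argument of the first author and collaborators (or D'Angelo's bound); composing with the degree-$\le 7$ parametrization of $Q^\e_p$ in $\xi$ multiplies the degree by at most $7$. That is precisely the source of the constant $d = \tfrac{7N(N+1)}{2}$.

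So the steps are: (1) recall the extension $(F^\e,\overline{F^\e}):\mathcal{M}_\e \to \mathcal{M}'$ and restrict to $Q^\e_p$; (2) from Proposition \ref{lem4}, $F^\e$ is a proper rational map $D_\e \to \mathbb{B}^N$ holomorphic over $\-{D_\e}$, so invoke the degree bound $\deg F^\e \le N(N+1)/2$ for proper rational maps between balls (referring to the known estimate, cf. [Mey] for the uniform-estimate context); (3) observe that $Q^\e_p$ for $w_p \neq 0$ is parametrized by $\xi \mapsto (\xi,\varphi(\-{z_p},\-{w_p},\xi))$ with $\deg_\xi\varphi = 7$; (4) conclude $\deg_\xi F^\e(\xi,\varphi(\-{z_p},\-{w_p},\xi)) \le 7\cdot\deg F^\e \le \tfrac{7N(N+1)}{2} = d$, after checking that clearing the common factor $R$ cannot raise the degree. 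I expect the main obstacle to be step (2): pinning down and citing the precise form of the degree bound for proper rational maps between balls in the form needed here, and verifying it applies to $D_\e$ rather than the unit ball — since $D_\e$ is strongly pseudoconvex but not the ball, one must first use the local-to-global extension from Proposition \ref{lem4} to view $F^\e$ as genuinely defined and proper on a ball-like domain, or argue the degree bound directly from the CR-geometry of $M_\e$ near a generic point. The rest is bookkeeping on the degree of $\varphi$ in $\xi$.
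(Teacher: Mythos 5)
There is a genuine gap, and it sits exactly where you flagged your ``main obstacle'': step (2) of your outline assumes a global degree bound $\deg F^\e \le N(N+1)/2$ for the proper rational map $F^\e: D_\e \to \mathbb{B}^N$, and then derives the lemma by composing with the degree-$7$ parametrization of $Q_p$. But no such off-the-shelf bound exists for the source domain $D_\e$. Meylan's estimate (and the D'Angelo-type bounds) are for proper rational maps \emph{between balls}; they are proved precisely by the Segre-variety/reflection machinery applied to the source sphere, and do not transfer to $M_\e$ by any soft argument. Worse, the logical order of the paper is the reverse of yours: Lemma \ref{lem5} bounds the degree of the \emph{restriction} of $F^\e$ to Segre varieties first, and only then do Lemmas \ref{lem6} and \ref{lem7} upgrade this to a bound on $\deg F^\e$ itself. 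So the global degree bound you want to cite is a \emph{consequence} of this lemma in the paper's architecture, not an available input; invoking it here is circular. (Also a small point: $F^\e|_{Q_p}$ is a rational function of $\xi$, not a polynomial --- the phrase ``holomorphic polynomial function in $\xi$'' in the statement refers to $\phi$.)

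The missing idea is the actual mechanism of the proof: one differentiates the mapping identity $\sum_j F_j^\e(z,w)\overline{F_j^\e(\xi,\eta)}=1$ (valid for $(z,w)\in Q_{(\xi,\eta)}$ with $(\xi,\eta)\in Q_{p_0}$) along $Q_{(\xi,\eta)}$ using the tangential vector field $\mathcal{L}$, whose coefficients are explicit polynomials of degree at most $7$ in $\overline\xi$, and evaluates at $p_0$. This produces a linear system for the unknowns $\overline{F_j^\e(\xi,\eta)}$ whose coefficient vectors $V_\alpha^\e$ have, on a suitable open piece $U$ where their rank is constant, a basis given by the first $k$ of them (padding with constant vectors if $k<N-1$). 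Solving by Cramer's rule and substituting $\eta=\phi(\overline{z_{p_0}},\overline{w_{p_0}},\xi)$ exhibits $F^\e|_{Q_{p_0}}$ as a ratio of determinants that are polynomials in $\xi$; the row of order-$j$ derivatives contributes degree roughly $7j$, and summing $7(1+2+\cdots+N)=\tfrac{7N(N+1)}{2}=d$ is exactly where the constant comes from. Your first paragraph sets up the correct Segre-variety relation but stops at the single linear equation $\langle F^\e(q),\overline{F^\e(p)}\rangle=1$, which, as you note, is not enough; the differentiation-plus-Cramer step is the content you would need to supply to close the argument.
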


\begin{proof}
Let $p_0=(z_0, w_0)\in M_\e$ with $w_{p_0}\not =0$.
For any $(\xi,\eta)\in Q_{p_0}$, we have
\begin{equation}\label{eqn23}
F_1^\e(z,w)\overline{F_1^\e(\xi,
\eta)}+\cdots+F_{N}^\e(z,w)\overline{F_{N}^\e(\xi,\eta)}=1,\
(z,w)\in Q_{(\xi,\eta)}.
\end{equation}

 Here we write $F^\e=(F_1^\e, \cdots,
F_{N}^\e).$ Recall $Q_{(\xi,\eta)}$ is given by
$\varepsilon_0(z^4\overline\xi^4+\frac c2(\overline\xi
z^7+z\overline\xi^7))+w\overline\eta+z^5\overline\xi^5+\e
z\overline\xi-1=0.$ Write

\begin{equation}
\mathcal{L}=(4\varepsilon_0\overline\xi^4z^3+\frac{7c\varepsilon_0}{2}\overline\xi
z^6+\frac{c\varepsilon_0}{2}\overline\xi^7+5\overline\xi^5z^4+\e\overline\xi)\frac{\partial}{\partial
w}-\overline\eta\frac{\partial}{\partial z}.
\end{equation}
Then $\mathcal{L}$ forms a basis for the holomorphic  tangent vector
fields of $Q_{(\xi,\eta)}$ near $(z, w)\in Q_{(\xi, \eta)}$. When
$(\xi,\eta)=(z,w)$ and moves along $U \subset M_{\e}$, ${\mathcal L}$ reduces to
the CR vector field along $U\subset M_\e$. Applying
$\mathcal{L}^\alpha,\ |\a|>0$, to (\ref{eqn23}) and evaluating at
$p_0$, one gets
\begin{equation}\label{eqn24}
\mathcal{L}^\alpha F_1^\e(z_0,
w_0)\overline{F_1^\e(\xi,\eta)}+\cdots+\mathcal{L}^\alpha
F_{N}^\e(z_0,w_0) \overline{F_{N}^\e(\xi,\eta)}=0,
\ |\a|> 0.
\end{equation}
Write
\begin{equation}
V_\alpha^\e(\xi, \eta)=(\mathcal{L}^\alpha
F_1^\e(z_0,w_0),\cdots,\mathcal{L}^\alpha
F_{N}^\e(z_0, w_0)).
\end{equation}
Choose $U\subset M_\e$ such that $\{V_\alpha^\e(z_0,
w_0)\}_{\a>0}^{\infty}$ has a constant rank $k\le N$ for
$(z_0,w_0)\in U$. Then, after shrinking $U$ if needed, by a calculus
computation (see [La], for instance) we conclude that
$\{V_\alpha^\e(z_0, w_0)\}_{\a>0}^{k}$ must be a basis of
$\{V_\alpha^\e(z_0, w_0)\}_{\a>0}^{\infty}$. Making use of
the Taylor expansion, we see that the linear span of
$\{V_\alpha^\e(z_0, w_0)\}_{\a>0}^{k}$ is the smallest
subspace containing $F^{\e}(Q_{(z_0,w_0)})-F^{\e}(z_0,w_0)$.

\begin{itemize}
  \item If $k=N-1$ in $U$,
   we can solve for $F^\e(\xi,\eta)$ for $(\xi,\eta)\in
   Q_{(z_0,w_0)}$
  from Equation (\ref{eqn23}) and  (\ref{eqn24})  by the Cramer rule. Notice that $\eta=\phi(\-{p_0},\xi)$ is solved as a polynomial function of
  $\xi$ of degree $7$.
  Therefore, as a rational function in $\xi$,  we get $$\mathrm{deg} F^\e|_{Q_{(z_0,w_0)}}\leq
  d$$ for $(z_0,w_0)\in U$.
  \item If $k<N-1 $, then one can find constant vectors ${\bf{V}}_1, \cdots, {\bf{V}}_{N-k}$ in $\mathbb{C}^{N}$ such that
   $$\mathrm{Span}\{{\bf{V}}_1, \cdots, {\bf{V}}_{N-k}\}\bigoplus\mathrm{Span}\{V_\alpha^\e(z_0, w_0) \}_{1\leq\alpha\leq k}=\mathbb{C}^{N-1}$$ and
    ${\bf{V}}_i\cdot (\-{F^\e(\xi,\eta)-F^{\e}(z_0,w_0)})=0$ on $Q_{(z_0,w_0)}, 1\leq i\leq N-k$.
    One can still apply Cramer's rule to solve for $F^\e(\xi,\eta)$
    with $(\xi,\eta)\in Q_{(z_0,w_0)}$ to show, as a rational
    function of $\xi$, that
      \begin{equation}
      \mathrm{deg} F^\e|_{Q_{(z_0,w_0)}}<d.
            \end{equation}
\end{itemize}
This completes the proof of the lemma.
\end{proof}

{\bf Remark}: The above argument  can be use to  show directly that
$F$ is rational (as a function in $\xi$) when restricted to a Segre
variety. However this type of information is not enough, in general,
to conclude the rationality of $F$: Let $M \subset \mathbb{C}^2$ be a strongly pseudoconvex
hypersurface defined by
$|w|^2=(1+|z|^2)^2$ and $g=\sqrt{w}$. The Segre variety $Q_{(z,w)}$
of $M$ for each $(z,w)$ is defined by $w\-{\eta}=(1+z\-{\xi})^2$.
$g|_{Q_{(z,w)}}=\pm\frac{1+\-{z}{\xi}}{\-{\sqrt{{w}}}}$, which is a
polynomial as a function in $\xi$ for $w\not = 0$.

\begin{lemma}\label{lem6}
Let $H=\frac{(P_{1},\cdots,P_{N})}{R}$ with $R(0,0)\neq 0$ be a
rational map from $\mathbb{C}^2\sm \{R=0\}$ into $\mathbb{C}^{N}$,
where $P_{j}, j=1,\cdots,N,R$ are holomorphic polynomials and
$(P_{1},\cdots,P_{N},R)=1.$ Assume that there is an open subset $U$
of $M_\e$ such that for each $p\in U$ with $w_p\not =0$ and, as a
rational function in $\xi$, $\mathrm{deg}(H|_{Q_p})\leq k$ with
$k>0$ a fixed integer. Then $\mathrm{deg}(H)\leq k$.
\end{lemma}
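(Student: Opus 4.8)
The plan is to parametrize the Segre varieties $Q_p$ explicitly and then use the hypothesis that $H$ restricted to each such slice has degree at most $k$ (in the slicing variable $\xi$) to force a bound on the numerator and denominator polynomials of $H$ globally. First I would recall from (\ref{111}) that for $p=(z_p,w_p)$ with $w_p\neq 0$, the Segre variety $Q_p$ is the graph $\eta=\varphi(\overline z_p,\overline w_p,\xi)$, where $\varphi$ is a polynomial in $\xi$ of degree $7$ (with coefficients rational in $\overline z_p,\overline w_p$, the only denominator being $\overline w_p$). Hence $H|_{Q_p}$ is, as a function of $\xi$, the rational function $\xi\mapsto H(\xi,\varphi(\overline z_p,\overline w_p,\xi))$. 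Writing $H=(P_1,\dots,P_N)/R$, after clearing the $\overline w_p$ in the denominator of $\varphi$ by multiplying numerator and denominator by a suitable power of $\overline w_p$, both numerator and denominator of $H|_{Q_p}$ become honest polynomials in $\xi$.

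The key step is the following: the hypothesis $\deg(H|_{Q_p})\le k$ for all $p$ in an open subset $U$ of $M_\e$ (with $w_p\neq 0$) says that for each such $p$, after cancelling common factors, the numerator and denominator of $H(\xi,\varphi(\overline z_p,\overline w_p,\xi))$ each have degree $\le k$ in $\xi$. I would phrase this as: the coefficient of $\xi^m$ (for every $m>k$) in each of $P_j(\xi,\varphi(\bar z_p,\bar w_p,\xi))\cdot(\text{common denominator})$ and in $R(\xi,\varphi(\bar z_p,\bar w_p,\xi))\cdot(\text{common denominator})$ must vanish — but one must be careful, because the "common denominator'' and the cancellation of common factors depend on $p$. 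The cleanest route is: suppose for contradiction that $\deg H=\deg_{(z,w)}H=:\ell>k$. Look at the top-degree behaviour. Substituting the graph $\eta=\varphi(\overline z_p,\overline w_p,\xi)$ and examining the highest power of $\xi$ that survives: since $\varphi$ has degree exactly $7$ in $\xi$ (its leading term is $-\tfrac{\varepsilon_0 c}{2}\overline z_p\,\xi^7/\overline w_p$, which is nonzero for generic $p$), generically no cancellation of the top-degree terms occurs, so $\deg_\xi\big(P_j(\xi,\varphi(\bar z_p,\bar w_p,\xi))\big)$ and $\deg_\xi\big(R(\xi,\varphi(\bar z_p,\bar w_p,\xi))\big)$ are controlled from below by $\ell$, and moreover the numerator and denominator of $H|_{Q_p}$ cannot both drop below degree $\ell$ unless $P_1,\dots,P_N,R$ share a common factor — contradicting $(P_1,\dots,P_N,R)=1$. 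The point to make precise: after substitution the gcd of $P_1\circ,\dots,P_N\circ,R\circ$ (as polynomials in $\xi$) is, for $p$ outside a proper subvariety, exactly the substitution of the gcd of $P_1,\dots,P_N,R$, which is $1$; so $\deg(H|_{Q_p})=\max_j\deg_\xi(P_j\circ),\deg_\xi(R\circ)\ge \ell>k$ for generic $p\in U$, the desired contradiction.

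I expect the main obstacle to be exactly this gcd-after-substitution step: one must verify that for generic $p$ the substitution map $g(\xi)\mapsto g(\xi,\varphi(\overline z_p,\overline w_p,\xi))$ does not create spurious common factors among the images of $P_1,\dots,P_N,R$, i.e. that coprimeness is preserved under restriction to a generic Segre leaf. This is where the specific shape of $\varphi$ — in particular that it is a genuine degree-$7$ polynomial in $\xi$ whose coefficients separate points of $M_\e$ — is used; one argues via a resultant computation, noting that the resultant of two of these polynomials, viewed as a function of the parameters $(\overline z_p,\overline w_p)$, is not identically zero (test at convenient parameter values), hence is nonzero for $p$ in a dense open subset of $U$. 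Once that is in hand, one picks a single such generic $p_0\in U$ with $w_{p_0}\neq 0$, reads off $\deg(H|_{Q_{p_0}})=\deg H$, and concludes $\deg H\le k$. A secondary (routine) point is bookkeeping the powers of $\overline w_p$ introduced when clearing denominators, but these never raise the $\xi$-degree and so do not affect the argument.
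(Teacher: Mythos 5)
Your overall strategy --- restrict $H$ to a generic Segre leaf $Q_p$, check that the top-degree terms survive the substitution $\eta=\varphi(\overline z_p,\overline w_p,\xi)$, and check that coprimality of $(P_1,\dots,P_N,R)$ is preserved under restriction so that no cancellation can lower the degree --- is the paper's strategy, and you correctly identify the gcd-after-substitution step as the crux. The paper disposes of that step more simply than you propose: since $(P_1,\dots,P_N,R)=1$ in $\mathbb{C}[\xi,\eta]$, the common zero set $A=\{P_1=\cdots=P_N=R=0\}$ is a \emph{finite} set of points; for each fixed $a\in A$, the set of $p=(z_0,w_0)$ with $a\in Q_p$ is cut out by a single polynomial equation in $(z_0,w_0)$, hence is a complex subvariety of dimension $1$; so the set of bad parameters is a finite union of complex curves, of real dimension $2$, which cannot contain the $3$-dimensional piece $U\subset M_\e$. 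For any $p\in U$ with $Q_p\cap A=\emptyset$ the restrictions $P_1|_{Q_p},\dots,P_N|_{Q_p},R|_{Q_p}$ have no common root in $\xi$, i.e.\ their gcd is $1$, and the conclusion follows.

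The one step in your write-up that would fail as stated is the resultant test: you propose to verify that ``the resultant of two of these polynomials'' is not identically zero in the parameters. But the hypothesis is only that the $(N+1)$-tuple is collectively coprime; two individual entries (say $P_1$ and $R$) may share a nonconstant common factor, in which case the resultant of their restrictions vanishes identically and the test establishes nothing. You must work with the common zero locus of \emph{all} $N+1$ polynomials (or with resultants of generic linear combinations), which is exactly what the finite-set argument above accomplishes. Two smaller remarks: (i) to pass from ``the exceptional set of parameters is a proper subvariety'' to ``some $p\in U$ is good,'' you need the dimension count above, since $U$ sits inside a real hypersurface rather than an open subset of $\mathbb{C}^2$; (ii) your attention to possible top-degree cancellation (the leading $\xi$-coefficient of $P_j(\xi,\varphi)$ being a nonzero polynomial in the leading coefficient of $\varphi$, hence nonzero for generic $p$) is warranted --- the paper asserts without detail that $\deg(H|_{Q_p})$ coincides with $\deg H$ whenever $Q_p\cap A=\emptyset$, and your analysis is what makes the inequality $\deg(H|_{Q_p})\ge\deg H$ honest.
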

\begin{proof}
Set
\begin{equation}
A=\{(\xi,\eta)\in\mathbb{C}^2: P_{1}(\xi,\eta)=\cdots= P_{N}(\xi,\eta)=R(\xi,\eta)=0\}.
\end{equation}
Then $A$ has at most   finitely many points. It is easy to see that
if $Q_p$ does not pass through any point of $A$, then as a rational
function in $\xi$, the degree of $H|_{Q_p}$ is the same as the
degree of $H$ as a rational function in all variables.
Thus it only remains to show the existence of $(z_0,w_0)\in U$ such
that $Q_{(z_0,w_0)}\cap A=\emptyset$. Indeed, fix $(\xi_0,\eta_0)\in
A$, then $\xi_0\neq0$ or $\eta_0\neq0$. $(\xi_0,\eta_0)\in
Q_{(z_0,w_0)}$ if and only if
\begin{equation}
\varepsilon_0\overline\xi_0^4
z_0^4+\frac{c}{2}\varepsilon_0(\overline\xi_0 z_0^7+z\overline
\xi_0^7)+ w_0\overline \eta_0+ z_0^5\overline\xi_0^5+
\e z_0\-\xi_0=1.
\end{equation}
The collection of such pairs $\{(z_0,w_0)\}$ is a complex subvariety
of complex dimension $1.$ Thus $\{(z,w)\in\mathbb{C}^2:
Q_{(z,w)}\cap A \neq \emptyset\}$ is a finite union of   complex
 subvarieties of complex dimension $1$. But $U\subset M_{\e}$ is of real dimension $3.$
 Thus there exists $(z_0, w_0)\in U$ such that $Q_{(z_0,w_0)}\cap A=\emptyset$.
\end{proof}

Notice that our $F^\e$ is holomorphic in $D_\e$ and thus at $0$. As
a consequence of Lemma \ref{lem5} and Lemma \ref{lem6}, we have the
following:

\begin{lemma}\label{lem7}
Let $F^\e, d$ be as in Lemma \ref{lem5}. Then $\mathrm{deg} F^\e\leq d.$
\end{lemma}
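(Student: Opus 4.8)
The plan is to combine the two preceding lemmas in the obvious way, with the only genuine point being to set up their hypotheses correctly for the map $F^\e$. First I would invoke Proposition \ref{lem4} to know that $F^\e$ is already a proper rational map from $D_\e$ into ${\mathbb B}^N$, holomorphic over $\-{D_\e}$; in particular it is holomorphic at the origin. Write $F^\e=\frac{(P_1,\dots,P_N)}{R}$ in lowest terms with $(P_1,\dots,P_N,R)=1$. Since $0\in D_\e$ and $F^\e$ is holomorphic there, we may normalize so that $R(0,0)\neq 0$; this is precisely the normalization required to apply Lemma \ref{lem6}.

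Next I would apply Lemma \ref{lem5} to $F^\e$. That lemma produces an open piece $U$ of $M_\e$ such that for every $p\in U$ with $w_p\neq 0$ one has $\mathrm{deg}\, F^\e|_{Q_p}\le d$ as a rational (in fact polynomial) function of $\xi$, where $d=\frac{7N(N+1)}{2}$. Shrinking $U$ if necessary we keep $w_p\neq 0$ throughout $U$ (the point $p^*$ where $W=\{w=0\}$ meets $M_\e$ is isolated in that intersection, so this costs nothing). Now the hypotheses of Lemma \ref{lem6} are met with $H=F^\e$, $k=d$: we have a rational map with $R(0,0)\neq 0$ and a fixed bound $\mathrm{deg}(H|_{Q_p})\le d$ on an open subset $U\subset M_\e$ at points with $w_p\neq 0$. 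Lemma \ref{lem6} then gives $\mathrm{deg}(F^\e)\le d$ as a rational function in all variables, which is exactly the assertion.

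There is essentially no obstacle here beyond bookkeeping: the content has been front-loaded into Lemmas \ref{lem5} and \ref{lem6}, and Lemma \ref{lem7} is their formal composition. The one item deserving a sentence of care is the compatibility of the two open sets: Lemma \ref{lem5} chooses $U$ so that the rank function $\a\mapsto \dim\mathrm{Span}\{V_\alpha^\e\}$ is locally constant, and Lemma \ref{lem6} needs $U$ open in $M_\e$ with points having $w_p\neq 0$; both are satisfied simultaneously by taking the $U$ from Lemma \ref{lem5} and intersecting with $\{w\neq 0\}$, which remains open and nonempty in $M_\e$. With that observed, the conclusion $\mathrm{deg}\,F^\e\le d$ follows at once, and the proof is complete. \endpf
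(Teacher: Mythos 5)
Your proposal is correct and coincides with the paper's own treatment: the paper derives Lemma \ref{lem7} in one line as "a consequence of Lemma \ref{lem5} and Lemma \ref{lem6}," noting only that $F^\e$ is holomorphic at $0$ so that the normalization $R(0,0)\neq 0$ in Lemma \ref{lem6} is available. Your additional bookkeeping about the compatibility of the open sets is harmless and only makes explicit what the paper leaves implicit.
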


The following three lemmas are to show the uniform boundedness of
the coefficients of $F^{\e}$.

\begin{lemma}\label{lem8}
Let $p(z)=\sum\limits_{i=1}^m a_iz^i+1$ be a holomorphic polynomial in $\mathbb{C}$. Assume that $p(z)\neq0$ in $\Delta$, where $\Delta$ is the unit disk centered at $0$ in $\mathbb{C}$. Then $|a_i|\leq C_m$ for all $1\leq i\leq m$, where $C_m$ is a constant depending only on $m$. Consequently, $|p(z)| \leq mC_{m}+1$ in $\Delta.$
\end{lemma}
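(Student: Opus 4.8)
The plan is to use the zero-free hypothesis, together with the normalization $p(0)=1$, to pin down the location of the roots of $p$ and thereby bound $\sup_{\overline{\Delta}}|p|$; the coefficient estimates then drop out of Cauchy's inequalities.

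First I would dispose of the degenerate case: since $p(0)=1$, $p\not\equiv 0$, so $d:=\deg p$ satisfies $0\le d\le m$; if $d=0$ then $p\equiv 1$, every $a_i=0$, and there is nothing to prove. Assume $d\ge 1$ and let $\zeta_1,\dots,\zeta_d$ be the roots of $p$, counted with multiplicity. Because $p$ has no zero in $\Delta$, each $|\zeta_j|\ge 1$. Writing $p(z)=a_d\prod_{j=1}^d(z-\zeta_j)$ and evaluating at $z=0$ gives $a_d\prod_j(-\zeta_j)=1$, hence the normalized factorization
\begin{equation*}
p(z)=\prod_{j=1}^d\Big(1-\frac{z}{\zeta_j}\Big).
\end{equation*}

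Next, for $z\in\overline{\Delta}$ and each $j$ we have $\big|1-z/\zeta_j\big|\le 1+|z|/|\zeta_j|\le 2$, so $|p(z)|\le 2^d\le 2^m$ on $\overline{\Delta}$. Put $C_m:=2^m$. Since $p$ is a polynomial (hence entire), the Cauchy integral formula gives $a_i=\frac{1}{2\pi i}\oint_{|z|=1}p(z)z^{-i-1}\,dz$ for each $1\le i\le m$, whence $|a_i|\le\sup_{|z|=1}|p(z)|\le 2^m=C_m$. The final assertion is then immediate from the triangle inequality: for $z\in\overline{\Delta}$ one has $|p(z)|\le\sum_{i=1}^m|a_i|\,|z|^i+1\le mC_m+1$.

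I do not expect any genuine obstacle here. The only mild points are: (i) allowing $\deg p<m$, which is handled by the trivial case together with the estimate $2^d\le 2^m$; and (ii) the open/closed convention for $\Delta$, which is immaterial, since a polynomial with no zeros in the open unit disk already has all roots of modulus at least $1$. One could of course sharpen the constant (e.g.\ to $C_m=\binom{m}{\lfloor m/2\rfloor}$), but this is not needed for the intended application.
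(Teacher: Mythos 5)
Your proof is correct. The starting point is the same as the paper's: factor $p$ over its roots, note that the zero-free hypothesis forces every root $\zeta_j$ to satisfy $|\zeta_j|\ge 1$, and use $p(0)=1$ to normalize the leading coefficient. The two arguments diverge only in the finishing step. The paper stays with the roots and applies Vieta's formulas: each $a_{k-j}$ is an elementary symmetric function of the roots divided by their product, and since all roots lie outside the open unit disk these quotients are bounded by a binomial-type constant; the sup bound $|p|\le mC_m+1$ is then deduced afterwards from the coefficient bounds. You instead go the other way around: from the normalized factorization $p(z)=\prod_j(1-z/\zeta_j)$ you first get the sup bound $|p|\le 2^m$ on $\overline{\Delta}$, and then recover the coefficient bounds from Cauchy's inequalities. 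Both routes are elementary and give explicit constants of the same binomial/exponential order; yours has the mild advantage of producing the sup-norm estimate directly rather than as a corollary, while the paper's is marginally more self-contained in that it never invokes the Cauchy integral. Your handling of the degenerate case $\deg p<m$ and of the open-versus-closed disk convention is careful and matches what the paper implicitly assumes.
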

\begin{proof}
We write $p(z)=a_k\Pi_{i=1}^k(z-z_i)$, where $1\leq k\leq m$ is the
largest number $l$ such that $a_l\neq 0$, and $\{z_i\}_{i=1}^k$ are
the roots of $p(z)$ in $\mathbb{C}$. Notice that $p(0)=1$ and
$p(z)\neq 0$ in $\Delta$, we get $|z_i|\geq1$ for all $1\leq i\leq
k$, and $|a_k\Pi_{i=1}^k z_{i}|=1$. Thus $|a_k|\le 1$. Moreover,  by
applying Vieta's formula, we have for each $1 \leq j\le k-1$,
$$|a_{k-j}|=|\frac{\sum_{l_1<\cdots <l_j}z_{l_1}\cdots z_{l_j}}{\Pi_{i=1}^k
z_{i}}|\le C_m$$ for a certain constant $C_m$ depending only on $m$.
\end{proof}
\begin{lemma}\label{lem9}
Let $p(z)=\sum_{|\alpha|=1}^m a_{\alpha}z^\alpha+1$ be a holomorphic polynomial in $\mathbb{C}^N, N\geq 1$. Assume that $p(z)\neq 0$ in $\mathbb{B}^N$. Then $|a_\alpha|\leq \widetilde{C}_{m}$ for all $1\leq|\alpha|\leq m$, where $\widetilde{C}_{m}$ is a positive constant depending only on $m$.
\end{lemma}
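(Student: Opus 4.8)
The plan is to reduce to the one-variable Lemma \ref{lem8} by restricting $p$ to complex lines through the origin. First I would fix a multi-index $\alpha$ with $|\alpha|=j$, $1\le j\le m$, and let $k=k(\alpha)$ be the number of indices $i$ with $\alpha_i>0$, so that $k\le j\le m$. Setting $z_i=0$ for every $i$ with $\alpha_i=0$ restricts $p$ to a coordinate subspace $\cong\mathbb{C}^k$ and produces a polynomial $\widetilde p(z)=\sum_{1\le|\beta|\le m}\widetilde a_\beta z^\beta+1$ in these $k\le m$ variables, with $\widetilde a_\alpha=a_\alpha$, with $\widetilde p(0)=1$, with $\deg\widetilde p\le m$, and still nonvanishing on $\mathbb{B}^k\subset\mathbb{B}^N$. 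Hence it suffices to bound $\widetilde a_\alpha$ for $\widetilde p$ in ambient dimension $k\le m$, which is exactly what will make the final constant depend on $m$ alone. (If one is content to let the constant depend on $N$ as well, this reduction can be skipped, and that would already suffice in the application of this lemma, where $N$ is fixed.)

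Next I would slice: for every unit vector $v\in\mathbb{C}^k$ put $q_v(\zeta):=\widetilde p(\zeta v)$ for $\zeta$ in the unit disc $\Delta$. Since $|\zeta v|<1$ for $\zeta\in\Delta$, the hypothesis forces $q_v\ne 0$ on $\Delta$, while $q_v$ is a one-variable polynomial of degree $\le m$ with $q_v(0)=1$. Writing $P_\ell(v):=\sum_{|\beta|=\ell}\widetilde a_\beta v^\beta$ for the homogeneous parts of $\widetilde p$, one has $q_v(\zeta)=1+\sum_{\ell=1}^{m}P_\ell(v)\zeta^\ell$, so Lemma \ref{lem8} applies and gives $|P_\ell(v)|\le C_m$ for all $\ell\le m$ and all unit vectors $v\in\mathbb{C}^k$.

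Finally I would recover $a_\alpha=\widetilde a_\alpha$ from the bound on $P_j$: since $\sup_{|v|=1}|P_j(v)|$ is a norm on the finite-dimensional space of homogeneous degree-$j$ polynomials in $k$ variables (a nonzero such polynomial cannot vanish on the unit sphere, hence cannot vanish identically) and $P_j\mapsto\widetilde a_\alpha$ is linear, one obtains $|\widetilde a_\alpha|\le c_{k,j}\sup_{|v|=1}|P_j(v)|$ for some $c_{k,j}$ depending only on $k,j$, hence only on $m$. Concretely one can extract $\widetilde a_\alpha=(2\pi)^{-k}\int_{[0,2\pi]^k}P_j(e^{i\theta_1},\dots,e^{i\theta_k})e^{-i(\alpha_1\theta_1+\cdots+\alpha_k\theta_k)}\,d\theta$, and since $(e^{i\theta_1},\dots,e^{i\theta_k})=\sqrt k\,v$ for a unit $v$ while $P_j$ is homogeneous of degree $j$, this gives $|a_\alpha|\le k^{j/2}C_m\le m^{m/2}C_m$, so $\widetilde C_m:=m^{m/2}C_m$ works. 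I expect the only mildly delicate point to be the bookkeeping in the reduction to $\le m$ variables (checking that the distinguished coefficient survives the restriction and that the nonvanishing on the ball is inherited); the slicing step and the coefficient-recovery step are routine, the latter being nothing more than the equivalence of norms in finite dimensions.
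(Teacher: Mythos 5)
Your proof is correct, and it shares the paper's first step --- restricting $p$ to complex lines through the origin and invoking Lemma \ref{lem8} --- but it diverges in how it gets from the sliced data back to the coefficients $a_\alpha$, and the divergence is worth noting. The paper uses only the sup-norm consequence of Lemma \ref{lem8} ($|\tilde p|\le mC_m+1$ on $\Delta$, hence $|p|\le mC_m+1$ on $\mathbb{B}^N$) and then appeals to ``the Cauchy estimate''; applied naively on a polydisc inscribed in $\mathbb{B}^N$, that estimate yields a bound of the form $(mC_m+1)N^{|\alpha|/2}$, which depends on $N$, so the paper's proof as written only literally delivers a constant depending on $m$ \emph{and} $N$ (harmless in the application, where $N$ is fixed, but not quite the statement as phrased). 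You instead use the coefficient bound from Lemma \ref{lem8} to control the homogeneous parts $P_\ell(v)$ uniformly over unit vectors $v$, first cut the ambient dimension down to $k\le|\alpha|\le m$ by restricting to the coordinate subspace of the variables actually appearing in $\alpha$, and then recover $a_\alpha$ by orthogonality on the $k$-torus, paying only a factor $k^{j/2}\le m^{m/2}$. That extra bookkeeping is exactly what makes the constant depend on $m$ alone, so your argument is a genuine (if minor) sharpening of the paper's; the paper's route is shorter and entirely adequate for the use made of the lemma in the proof of Theorem \ref{th1}.
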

\begin{proof}
 Fix $z\in\p\mathbb{B}^N.$ Set $\tilde p(\xi)=p(\xi z), \xi\in \Delta,$ which is a holomorphic polynomial in $\mathbb{C}.$
 Noting that $\tilde{p}(\xi) \neq 0$ in $\Delta,$ by Lemma \ref{lem8},  $|\tilde p(\xi)|\leq mC_m+1,$ where $C_{m}$ is as in Lemma \ref{lem8}.
  Consequently, $|p(z)|\leq mC_m+1, \forall z\in \mathbb{B}^N$. By the Cauchy estimate, we conclude that there exists some constant $\widetilde{C}_{m}$ such that $|a_\alpha|\leq  \widetilde{C}_{m}$ for all $1\leq |\alpha|\leq m$.
\end{proof}
\begin{lemma}\label{lem10}
Let $F^\e, d$ be as in Lemma \ref{lem5} and assume that
$F^\e(0)=0$. Write
$F^\e(z,w)=\frac{P^\e(z,w)}{Q^\e(z,w)}$,
where $P^\e(z,w)=\sum\limits_{1\leq i+j\leq
d}a_{ij}^\e z^iw^j, Q^\e(z,w)=\sum\limits_{1\leq
i+j\leq d}b_{ij}^\e z^iw^j+1$. Moreover $(P^\e,
Q^\e)=1$. Then $|a_{ij}^\e|\leq C,
|b_{ij}^\e|\leq C$ for some constant $C$ depending only on
$N$.
\end{lemma}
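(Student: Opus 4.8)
The plan is to exploit the fact, established in Lemma \ref{lem7}, that $\mathrm{deg} F^\e \le d$ uniformly in $\e$, so that $P^\e$ and $Q^\e$ have degree at most $d$; only finitely many monomials can appear, and the goal becomes bounding their coefficients. Since $F^\e$ maps $D_\e$ properly into $\mathbb{B}^N$ with $F^\e(0)=0$, the denominator $Q^\e$ is zero-free on $\overline{D_\e}$; I would first want to arrange that $Q^\e$ is zero-free on a fixed ball $\mathbb{B}^2_r$ of radius $r$ independent of $\e$. This should follow because $D_\e$ contains a fixed ball around $0$ for all small $\e$ — indeed $M_\e$ is a small perturbation of $M_0$ and, writing out $\rho$ in (\ref{equhym}), the origin is interior with a uniform margin — so after the rescaling $(z,w)\mapsto (rz,rw)$ we may assume $Q^\e$ is a polynomial of degree $\le d$, with constant term normalized to $1$ (using $F^\e(0)=0$, which forces $R(0)\neq 0$ so we can divide through), that is nowhere zero on $\mathbb{B}^2$.

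Next I would apply Lemma \ref{lem9} (in $N=2$ variables, with $m=d$) to $Q^\e$ directly: since $Q^\e = \sum_{1\le i+j\le d} b_{ij}^\e z^i w^j + 1$ is zero-free on $\mathbb{B}^2$, its coefficients satisfy $|b_{ij}^\e| \le \widetilde{C}_d$ with $\widetilde{C}_d$ depending only on $d$, hence only on $N$. Consequently $|Q^\e| \le d\,\widetilde{C}_d + 1 =: C_1$ on $\mathbb{B}^2$, and also $|Q^\e|$ is bounded below on a slightly smaller ball — here I would need a lower bound, which does not come from Lemma \ref{lem9} but can be obtained from a normal-families / compactness argument: the family $\{Q^\e\}$ lies in a finite-dimensional space of polynomials with bounded coefficients and $Q^\e(0)=1$, and each is zero-free on $\mathbb{B}^2$, so any limit $Q^0$ is either zero-free or identically zero on $\mathbb{B}^2$; since $Q^0(0)=1$ it is zero-free, and by Hurwitz plus compactness we get a uniform lower bound $|Q^\e| \ge c_0 > 0$ on $\mathbb{B}^2_{1/2}$ for all small $\e$.

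For the numerator, I would use that $|F^\e| \le 1$ on $D_\e \supset \mathbb{B}^2_{1/2}$ (after rescaling), so each component satisfies $|P_k^\e| = |F_k^\e|\,|Q^\e| \le C_1$ on $\mathbb{B}^2_{1/2}$; since $P_k^\e$ is a polynomial of degree $\le d$, the Cauchy estimates on the ball $\mathbb{B}^2_{1/2}$ bound all its coefficients $a_{ij}^\e$ by a constant depending only on $d$ and hence only on $N$. This completes the argument. The main obstacle is the lower bound on $|Q^\e|$: Lemma \ref{lem9} only gives zero-freeness and an upper bound, and without a uniform lower bound one cannot pass the estimate $|F^\e|\le 1$ onto $P^\e$. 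The fix is the compactness/Hurwitz argument above, which crucially uses the uniform degree bound (finite dimensionality), the normalization $Q^\e(0)=1$, and that the $D_\e$ contain a common ball — all of which are available from the preceding lemmas and from the explicit form of $M_\e$. A secondary point to be careful about is that dividing $F^\e = (P_1,\dots,P_N)/R$ to achieve $R(0)=1$ requires $R(0)\neq 0$, which holds because $F^\e$ is holomorphic at $0$; and that the rescaling only changes the degree bound and the constants in a controlled, $\e$-independent way.
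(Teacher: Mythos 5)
Your proof follows essentially the same route as the paper's: a ball $B(0,r)\subset D_\e$ with $r$ independent of $\e$, Lemma~\ref{lem9} applied to the rescaled denominator to bound the $b_{ij}^{\e}$, then the bound $|P^{\e}|=|F^{\e}|\,|Q^{\e}|\le |Q^{\e}|$ on that ball followed by Cauchy estimates for the $a_{ij}^{\e}$. One correction: the uniform lower bound on $|Q^{\e}|$ that you single out as the ``main obstacle'' is never actually used --- your own numerator estimate $|P_k^{\e}|\le 1\cdot|Q^{\e}|\le C_1$ requires only the \emph{upper} bound on $|Q^{\e}|$ (a lower bound would be needed only to go in the opposite direction, from bounds on $P^{\e},Q^{\e}$ to a bound on $F^{\e}$), so the Hurwitz/compactness detour, though valid, is superfluous.
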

\begin{proof}

Notice that there exists $r>0$ independent of $0<\e<1$ such
that $B(0,r)\subset D_\e$ and $ Q^\e(z,w)\neq 0$
in $B(0,r).$  As an application of Lemma \ref{lem9}, one can show
the uniform boundedness of $|b_{ij}^\e|$ by considering
$\tilde Q^\e (z,w)=Q^\e(\sqrt rz,\sqrt rw)$.
Consequently, $P^{\e}$ is uniformly bounded in $B(0,r)$ for
all $\e.$ And the uniform boundedness of
$a_{ij}^\e$ follows from the Cauchy estimate.
\end{proof}
Set $M_{0}=\{(z,w)\in\mathbb{C}^2:\rho=\varepsilon_0(|z|^8+c\mathrm{Re}|z|^2z^6)+|w|^2+|z|^{10}-1=0\}.$ Notice that $M_{0}$ has the Kohn-Nirenberg property at the point $(0,1).$ Here recall that (see [HZ]) a real hypersurface $M \subset \mathbb{C}^{n}$ is said to satisfy the Kohn-Nirenberg property at $p \in M,$ if for any holomorphic function $h \not \equiv 0$ in any neighborhood $U$ of $p$ in $\mathbb{C}^n$ with $h(p)=0,$ the zero set $\mathcal{Z}$ of $h$ intersects $M$ transversally at some smooth point of $\mathcal{Z}$ near $p.$ As an immediate application of Theorem 3.6 in [HZ], one has the following lemma,
\begin{lemma} \label{lem11}
Let $M_{0}$ be as above. Then any holomorphic map sending  an open piece of $M_{0}$ into $\partial \mathbb{B}^N$ is a constant.
\end{lemma}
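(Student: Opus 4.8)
The plan is to obtain Lemma~\ref{lem11} as a direct application of Theorem 3.6 of [HZ], so that the only real task is to verify the hypotheses of that theorem for $M_0$. Theorem 3.6 of [HZ] says, in the form we need, that a real algebraic hypersurface which satisfies the Kohn--Nirenberg property at one of its points admits no non-constant local holomorphic map into any sphere. The first hypothesis, that $M_0$ is real algebraic, is clear from the explicit defining polynomial of $M_0$ displayed just before the lemma. The second, that $M_0$ has the Kohn--Nirenberg property at some point, is exactly the observation recorded above: $M_0$ satisfies the Kohn--Nirenberg property at $(0,1)$. This is where the restriction $2<c<\frac{16}{7}$ enters: it is precisely the range in which the classical Kohn--Nirenberg computation shows that the zero set of any germ of a non-trivial holomorphic function vanishing at $(0,1)$ meets $M_0$ transversally at some nearby smooth point. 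Granting these two facts, Theorem 3.6 of [HZ] immediately gives that every holomorphic map from an open piece of $M_0$ into $\partial\mathbb{B}^N$ is constant.

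For context I would also indicate the internal mechanism of Theorem 3.6 of [HZ], since it runs parallel to what has already been done in this section. Assume, toward a contradiction, that $F$ is a non-constant holomorphic map from an open piece $U$ of $M_0$ into $\partial\mathbb{B}^N$. By the algebraicity theorem of [Hu], $F$ is complex algebraic, hence continues holomorphically along paths not cutting some proper algebraic subset of $\mathbb{C}^2$; one then runs the Segre variety and monodromy analysis, in the spirit of Proposition~\ref{lem4} and of [HZ], to see that the branch loci cannot give rise to a genuinely multivalued continuation, so that $F$ is in fact a rational map, holomorphic over $\overline{D_0}$ and properly mapping $D_0$ into $\mathbb{B}^N$. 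Writing $F=(P_1,\dots,P_N)/R$ in lowest terms and using that $\sum_j |F_j|^2\equiv 1$ on $M_0$, one passes to a polynomial identity on the complexification $\mathcal{M}_0$; the Kohn--Nirenberg property at $(0,1)$ is then used to force the denominator, and ultimately $F$ itself, to be constant, contradicting the assumption.

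The main obstacle does not lie in the present deduction, which is indeed immediate once Theorem 3.6 of [HZ] is available; it lies in the two inputs that deduction rests on: the verification that $M_0$ has the Kohn--Nirenberg property at $(0,1)$ (the observation quoted above, which is the place the numerical range of $c$ is essential), and, inside [HZ], the argument that this transversality property is genuinely incompatible with the polynomial identity produced by a non-constant rational sphere map. With Lemma~\ref{lem11} in hand, it will serve as the $\e=0$ anchor point for the perturbation argument in $\e$ carried out in the remainder of the paper.
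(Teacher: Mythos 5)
Your proposal matches the paper's treatment exactly: the paper also disposes of this lemma by observing that $M_0$ satisfies the Kohn--Nirenberg property at $(0,1)$ and then invoking Theorem 3.6 of [HZ] as an immediate application. The additional context you give on the internal mechanism of that theorem is consistent with [HZ] but is not part of the paper's (one-line) argument.
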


\bigskip
We are now ready to prove our main theorem.

\medskip
 {\bf Proof of Theorem
\ref{th1}}. Seeking a contradiction, suppose the statement in the
main theorem does not hold. Then  for a certain positive integer $N$
and for a certain sequence $1>\e_k\ra 0^+$,
$M_{\e_k}$ are locally holomorphically embeddable into
$\mathbb{S}^{2N-1}$ for any $\e_k$. For each of such
$\e_k$, write a local holomorphic  embedding as
$F^{\e_k}$. Then, by Lemma \ref{lem4}, $F^{\e_k}$
extends to a rational and holomorphic map over $\-{D_\e}$.
 After composing with an automorphism of
$\mathbb{B}^{N}$, we can assume that $F^{\e_k}(0)=0$.

 By Lemma \ref{lem7} and Lemma \ref{lem10}, we can  write
\begin{equation}
F^{\e_k}(z,w)=\frac{\sum\limits_{i+j=1}^d
a_{ij}^{\e_k} z^iw^j}{\sum\limits_{i+j=1}^d
b_{ij}^{\e_k} z^iw^j+1},
\end{equation}
where $d=\frac{7N(N+1)}{2}$ and $|a_{ij}^{\e_k}|\leq C,
|b_{ij}^{\e_k}|\leq C$ for all $i,j$ with
 $C$  a constant as in Lemma \ref{lem10}.
 Hence after passing to a subsequence if necessary, we can assume
 that
  $a_{ij}^{\e_k}\rightarrow a_{ij}, b_{ij}^{\e_k}\rightarrow b_{ij}$
  as $k\rightarrow\infty$ for some $a_{ij}\in\mathbb{C}, b_{ij}\in\mathbb{C}$ for all $i,j$.
  Set $F(z,w)=\frac{P(z,w)}{Q(z,w)}$, where $P(z,w)=\sum\limits_{i+j=1}^d a_{ij}z^iw^j$ and $ Q(z,w)=\sum_{i+j=1}^d
   b_{ij}z^iw^j+1$.
  Let $V=\{(z,w)\in\mathbb{C}^2: Q(z,w)=0\}$ be the variety defined by the zeros of $ Q(z,w)$
in $\mathbb{C}^2$. It is easy to see that for any open subset
$K\subset\subset \mathbb{C}^2\setminus V$, we have
$F^{\e_k}$ converges to $F$ uniformly in $K$. Pick $p_0\in
M\setminus V$ and a neighborhood $U$ of $p_0$ with
$U\subset\subset\mathbb{C}^2\setminus V$. $F^{\e_k}$
converges to $F$ uniformly in $\overline U$. Notice that for any
$p\in U\cap M$, there exists $p_k\in M_{\e_k}$ such that
$p_k\rightarrow p$ as $k\rightarrow\infty$. Then
$\|F(p)\|=\lim_{k\rightarrow\infty}\|F^{\e_k}(p_k)\|=1$. By
Lemma \ref{lem11}, $F$ is a constant map from
 $D_\e\cap M$ into the sphere. This   is a contradiction, for we know that $F(0)=0$. The proof of Theorem
\ref{th1} is complete.
\endpf

\bigskip
{\bf Remark 2.12}. It is clear that with the same proof, we can
construct a lot of more similar examples as in Theorem \ref{th1}.

Next, to see that $M_\e$ can be holomorphically embedded into the
generalized sphere in ${\mathbb C}^6$ with one negative Levi
eigenvalue, we observe that $
\hbox{Re}(|z|^2z^6)=\frac{1}{4}(|z^7+z|^2-|z^7-z|^2).$  Thus the map
$$F(z,w)=(\sqrt{\varepsilon_0}z^4, \frac{1}{2}\sqrt{\varepsilon_0 c}(z^7+z), w, z^5,
\sqrt{\e}z,  \frac{1}{2}\sqrt{\varepsilon_0 c}(z^7-z))$$
 holomorphically embeds $M_\e$ into the generalized sphere in ${\mathbb C}^6$ defined
 by ${\mathbb S}^{11}=\{(Z_1,\cdots,Z_6)\in {\mathbb C}^6: \
 \sum_{j=1}^{5}|Z_j|^2-|Z_6|^2=1\}.$

\noindent X. Huang, Department of Mathematics, Rutgers University at
New Brunswick, New Jersey 08903, USA. (huangx$@$math.rutgers.edu)

\noindent X. Li, School of Mathematics and Statistics, Wuhan
University, Hubei, Wuhan 430072, China. (xiaoshanli$@$whu.edu.cn)

\noindent M. Xiao, Department of Mathematics, Rutgers University at
New Brunswick, New Jersey 08903, USA. (mingxiao$@$math.rutgers.edu)

\begin{thebibliography}{99999}

\bibitem [Ch] {ch} S. Chiappari, {\it Holomorphic extension of proper
meromorphic mappings}, {\it Mich. Math. J.} 38, 167- 174 (1991).

\bibitem [ES]{ES} P. Ebenfelt and D. Son, {\it On the existence of holomorphic embeddings of strictly pseudoconvex algebraic hypersurfaces into spheres},
 May, 2012. (arXiv:1205.1237).


\bibitem [Fa]{Fa} J. J. V. Faran, {\it The nonimbeddability of real hypersurface in spheres.} {\it Proc. Amer. Math. Soc.}103, 3(1988), 902-904.

\bibitem [Forn]{forn} J. E. Forn{\ae}ss, {\it Strictly pseudoconvex domains in convex
domains}, {\it Amer. J. Math.} 98 (1976), 529-569.

\bibitem[For]{For} F. Forstneric, {\it Embedding strictly pseudoconvex domains into balls}, {\it Transations of AMS} (Vol.295), No.1 (May, 1986), 347-368.

\bibitem [HJ] {HJ} X. Huang and S. Ji, {\it Mapping $\mathbb{B}^{n}$ into $\mathbb{B}^{2n-1}$}, {\it Invent.Math.}, 145(2001), 219-250.
\bibitem [Hu] {Hu} X.Huang, {\it On the mapping problem for algebraic real hypersurfaces  in the complex spaces of different dimensions},
 {\it Annales de l'institut Fourier}, 44.2(1994) 433-463.
\bibitem[HZ]{HZ} X. Huang, D. Zaitsev, {\it Non-embeddable real algebraic hypersurface}, {\it Math. Z.}, 275, No. 3-4, 657-671 (2013).

\bibitem [KN]{kn}
J. J. Kohn and L. Nirenberg, {\it A pseudo-convex domain not admitting a
holomorphic support function}, {\it Math. Ann.}, 201 (1973), 265-268.

\bibitem [La] {La} B. Lamel, {\it A reflection principle for real-analytic
submanifolds of complex spaces}, {\it J. Geom. Anal.} 11 , no. 4,
625-631, (2001).

\bibitem [Mey] {mey} F. Meylan, {\it Degree of a holomorphic map between unit balls from $\mathbb
C^{2}$ to $\mathbb C^{n}$},
   {\it Proc. Amer. Math. Soc.} 134 (2006), 1023-1030.

\bibitem [We] {We1} S. M. Webster, {\it Some birational invariants for algebraic real
hypersurfaces},  {\it Duke Math. J.}, Volume 45, Number 1 (1978), 39-46.


\bibitem [Zat] {Z} D. Zaitsev, {\it Obstructions to embeddability into hyperquadrics and explicit examples},
{\it Math. Ann.}, 342 (2008), no. 3, 695-726.

\end{thebibliography}
\end{document}